\documentclass[a4paper,11pt]{article}

\usepackage{amsmath,amsthm,amsfonts,amssymb, color}
\usepackage{graphicx}
\usepackage [english]{babel}
\usepackage[latin1]{inputenc}
\usepackage[all]{xy}
\usepackage{enumerate}

\newtheorem{theorem}{Theorem}[section]
\newtheorem{lem}[theorem]{Lemma}
\newtheorem{cor}[theorem]{Corollary}
\newtheorem{prop}[theorem]{Proposition}
\newtheorem{thm}[theorem]{Theorem}
\newtheorem{ex}[theorem]{Example}

\theoremstyle{definition}
\newtheorem{defn}[theorem]{Definition}
\newtheorem{rem}[theorem]{Remark}

\newcommand{\cd}{\cdot}
\newcommand{\ra}{\rightarrow}
\newcommand{\Ra}{\Rightarrow}
\newcommand{\RLa}{\Leftrightarrow}
\newcommand{\rla}{\leftrightarrow}
\newcommand{\s}{\sim}
\newcommand{\we}{\wedge}
\newcommand{\ES}{\mathrm{ES}}
\newcommand{\IS}{\mathrm{IS}}
\newcommand{\WIS}{\mathrm{hIS}}
\newcommand{\WISWf}{\mathrm{hIS_{\mathrm{4}}}}
\newcommand{\WISWfi}{\mathrm{hIS_{\mathrm{5}}}}

\newcommand{\gEA}{\mathrm{ShIS}}
\newcommand{\gEAE}{\mathrm{ShIS_{E}}}

\begin{document}

\title{On a symmetrization of hemiimplicative semilattices}

\author{José Luis Castiglioni and Hernán Javier San Martín}

\date{}
\maketitle

\begin{abstract}
A hemiimplicative semilattice is a bounded semilattice $(A,
\wedge, 1)$ endowed with a binary operation $\ra$, satisfying that
for every $a, b, c \in A$, $a \leq b \ra c$  implies  $a \we b
\leq c$ (that is to say, one of the conditionals satisfied by the
residuum of the infimum) and the equation $a \ra a = 1$. The class
of hemiimplicative semilattices form a variety.  These structures
provide a general framework for the study of different structures
of interest in algebraic logic.
In any hemiimplicative semilattice it is possible to define a
derived operation by $a \s b := (a \ra b) \wedge (b \ra a)$.
Endowing $(A, \wedge, 1)$ with the binary operation $\s$ results
again a hemiimplicative semilattice, which also satisfies the
identity $a \s b = b \s a$. We call the elements of the subvariety
of hemiimplicative semilattices satisfying $a \ra b = b \ra a$, a
symmetric hemiimplicative semilattice. In this article, we study the
correspondence assigning the symmetric hemiimplicative
semilattice $(A, \wedge, \s , 1)$ to the hemiimplicative semilattice
$(A, \wedge, \ra, 1)$. In particular,
we characterize the image of this correspondence. We also provide many new
examples of hemiimplicative semilattice structures on any bounded
semillatice (possibly with bottom). Finally, we characterize
congruences on the clases of hemiimplicative semilattices
introduced as examples and we describe the principal congruences
of hemiimplicative semilattices.
\end{abstract}

\section{Introduction}
\label{int}

Recall that a structure $(A,\leq, \cd, e)$ is said to be a
partially ordered monoid if $(A,\leq)$ is a poset, $(A,\cd, e)$ is
a monoid and for all $a,b,c\in A$, if $a \leq b$ then $a\cd c \leq
b \cd c$ and $c\cd a \leq c \cd b$. Although commutativity do not
play any special feature in the discussion that follow, we shall
assume in this article that all monoids are commutative in order
to make the exposition more clear.

Let us also recall that the residuum (when it exists) of the
monoid operation of a partially ordered monoid $(A,\leq, \cd, e)$
is a binary operation $\ra$ on $A$ such that for every $a$, $b$
and $c$ in $A$,
\begin{equation*}\label{residuum}
a \cd b \leq c \textrm{ if and only if } a \leq b \ra c.
\end{equation*}

Note that previous equivalence can be seen as the conjunction of
the following conditionals:

\begin{enumerate}
\item[(r)] If  $a \cd b \leq c$  then  $a \leq b \ra c$ and
\item[(l)] If  $a \leq b \ra c$  then  $a \cd b \leq c$.
\end{enumerate}

This suggest us to consider binary operations $\ra$ satisfying
either (r) or (l) above. We call such operations r-hemiresidua
(l-hemiresidua) of the monoid operation respectively.

Let $\mathbf{A} = (A,\leq, \cd, e)$ be a partially ordered monoid.
An \emph{r-hemiresiduated monoid} is the expansion of $\mathbf{A}$
with an r-hemiresiduum. Similarly, we define an
\emph{l-hemiresiduated monoid}.
Clearly, every partially ordered residuated monoid is both an
r-hemiresiduated monoid and an l-hemiresiduated monoid.
Straightforward calculations show that the $BCK$-algebras with
meet \cite{Id84a,Kuhr} are examples of r-hemiresiduated monoids
which in general are not residuated. Some examples of
l-hemiresiduated monoids which in general are not residuated can
be found for instance in \cite{SM1}.

\begin{rem}\label{mp}
Let $(A,\leq, \cd, e)$ be a partially ordered monoid and $\ra$ a
binary operation on $A$. Then $(A,\leq, \cd, \ra, e)$ is an
l-hemiresiduated monoid if and only if $a\cd (a\ra b) \leq b$ for
every $a,b \in A$. In order to prove it, suppose that $(A,\leq,
\cd, \ra, e)$ is an l-hemiresiduated monoid. For every $a,b\in A$
we have that $a\ra b\leq a\ra b$, so $a\cd (a\ra b) \leq b$.
Conversely, suppose that $a\cd (a\ra b) \leq b$ for any $a,b\in
A$. Let $a,b,c\in A$ such that $a\leq b \ra c$. Then $a\cd b \leq
b\cd (b\ra c) \leq c$, so $a\cd b \leq c$. Therefore, $(A,\leq,
\cd, \ra, e)$ is an l-hemiresiduated monoid.
\end{rem}

In this work we shall be particularly interested in the case that
the partially ordered monoid is idempotent; more precisely, a meet
semillatice whose order agrees with that induced by the infimum.
We call \emph{r-hemiresiduated semilattices}
(\emph{l-hemiresiduated semilattices}) to the expansion of these
structures with a r-hemiresiduum (l-hemiresiduum). Throughout this
paper we write semilattice in place of meet semilattice. A
semilattice $(A,\we)$ is said to be bounded if it has a greatest
element, which will be denoted by $1$; in this case we write
$(A,\we,1)$. A \emph{l-hemiimplicative semilattice} is an
l-hemiresiduated semilattice satisfying some additional
conditions. Since we shall only consider l-hemi-implicative
semilattice, in what follows we shall omit the l- prefix.

\begin{defn}
A \emph{hemiimplicative semilattice} is an algebra $(A,\we,\ra,1)$
of type $(2,2,0)$ which satisfies the following conditions:
\begin{enumerate}
\item[(H1)] $(A,\we,1)$ is a bounded semilattice, \item[(H2)] for
every $a \in A$, $a \ra a = 1$ and \item[(H3)] for every $a,b,c
\in A$, if $a\leq b\ra c$ then $a\we b \leq c$.
\end{enumerate}
\end{defn}

Notice that (H3) is the condition (l) for the case in which $\cd =
\we$. Hemiimplicative semilattices were called weak implicative
semilattices in \cite{SM2}. Let $(A,\we)$ be a semilattice and
$\ra$ a binary operation. Then by Remark \ref{mp} we have that $A$
satisfies (H3) if and only if for every $a,b \in A$ the equation
$a\we (a\ra b)\leq b$ is satisfied. In every hemiimplicative
semilattice we define $a \rla b: = (a \ra b) \we (b \ra a)$. We
write $\WIS$ for the variety of hemiimplicative semilattices.

\begin{rem}
Let $A\in \WIS$ and $a,b\in A$. Then $a = b$ if and only if $a\rla
b = 1$. We also have that $1\ra a\leq a$.
\end{rem}

Recall that an implicative semilattice \cite{Cu,N} is an algebra
$(A, \we, \ra)$ of type $(2,2)$ such that $(A,\we)$ is
semilattice, and for every $a,b,c\in H$ it holds that $a\we b \leq
c$ if and only if $a\leq b\ra c$. Every implicative semilattice
has a greatest element. In this paper we shall include the
greatest element in the language of the algebras. We write $\IS$
for the variety of implicative semilattices.

\begin{rem}
Clearly, $\IS$ is a subvariety of $\WIS$. Other examples of
hemiimplicative semilattices are the $\{\we,\ra, 1\}$-reduct of
semi-Heyting algebras \cite{S1,S2} and the $\{\we, \ra,
1\}$-reduct of some algebras with implication \cite{CCSM}, as for
example the $\{\we,\ra,1\}$-reduct of RWH-algebras \cite{CJ}.
\end{rem}

In \cite{Jen} Jenei shows that the class of BCK algebras with meet
is term equivalent to the class of equivalential equality
algebras, and he defines the equivalence operation $\s$ in terms
of the implication in the usual way; i.e., $a \s b := a \rla b$.
Some of these ideas were generalized and studied for pseudo
$BCK$-algebras \cite{Ciu,D,Jen2}.

In particular, the variety of implicative semilattices is term
equivalent to a subvariety of that of equivalential equality
algebras. Let us write $\ES$ for this subvariety. The algebras in
the class $\ES$ satisfy:
\begin{enumerate}
\item[a)] $a \s b = b \s a$, \item[b)] $a \s a = 1$, \item[c)] $a
\we (a \s b) \leq b$.
\end{enumerate}

On the other hand, implicative semilattices satisfy b) and c)
above, but of course not necessarily a). Hence there seems to be a
common frame for both classes of algebras, where the algebras in
$\ES$ may be seem as elements with a symmetric implication and the
construction $a \s b := a \rla b$ a sort of symmetrization of the
original implication. In this paper we explore a convenient
framework where the aforementioned intuitions could be made
precis. Most results concerning the relation between implicative
semilattices and the class $\ES$ are part of the folklore.
However, for the sake of completeness, we shall recall some basic
results in Section \ref{basic}.

In Section \ref{section3} some subvarieties of hemiimplicative
semilattices are presented. New examples of hemiimplicative
semilattices are provided, by defining a suitable structure on any
bounded (sometimes with bottom) semilattices. The relationship
between the variety of hemiimplicative semilattices and its
subvariety of symmetric elements is studied.

In Section \ref{section4} we characterize congruences on the
clases of hemiimplicative semilattices introduced in the examples
of Section \ref{section3} and we describe the principal
congruences of hemiimplicative semilattices.

\section{Relation between $\IS$ and $\ES$} \label{basic}

As we have mentioned before, the relation between implicative
semilattices and the class $\ES$ is part of the folklore. However,
for the sake of completeness, we shall make explicit some details
about this relation.

\begin{rem} \label{r1}
\begin{enumerate}[\normalfont a)]
\item In every implicative semilattice $(A,\we,\ra,1)$ we have
that $(A,\we,1)$ is a bounded semilattice and $a\rla a = 1$ for
every $a\in A$. We also have that for every $a,b,c\in A$, $c \leq
a \rla b$ if and only if $a\we c = b \we c$. \item Implicative
semilattices satisfy $a\ra b = a \rla (a\we b)$ for every $a,b\in
A$. \item Consider an algebra $(A,\we,\s,1)$ of type $(2,2,0)$
such that $(A,\we)$ is a semilattice. For every $a,b,c\in A$ we
consider the following conditions: 1) $a \we (a\s b) = b \we (a\s
b)$ and 2) if $a\we c = b\we c$ then $c \leq a \s b$. For every
$a,b,c \in A$ conditions 1) and 2) are satisfied if and only if we
have that $a\s b =$ max $\{c \in A: a\we c = b\we c\}$ for every
$a,b\in A$.
\end{enumerate}
\end{rem}

In the following proposition we consider a particular class of
algebras.

\begin{prop} \label{isl1}
Let $(A,\we,\s,1)$ be an algebra of type $(2,2,0)$ such that
satisfies the following conditions:
\begin{enumerate}[\normalfont 1)]
\item $(A,\we,1)$ is a bounded semilattice, \item $a\s a = 1$,
\item $a \we (a \s b) = b \we (a\s b)$, \item if $a\we c = b\we c$
then $c \leq a \s b$.
\end{enumerate}
Then $(A,\we, \Ra,1)\in \IS$, where $\Ra$ is defined by $a\Ra b =
a \s (a\we b)$. Moreover, we can replace the condition $4)$ by the
inequality
\begin{enumerate}[\normalfont 4')]
\item $c\we ((a\we c) \s (b\we c)) \leq a\s b$.
\end{enumerate}
Therefore, the class of algebras of type $(2,2,0)$ which satisfy
the conditions $1)$, $2)$, $3)$ and $4)$ is a variety.
\end{prop}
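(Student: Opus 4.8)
The plan is to establish the proposition in two stages: first showing that the defining conditions 1)--4) force $(A,\we,\Ra,1)$ to be an implicative semilattice, and then showing that condition 4) may be replaced by the equational condition 4'), which in turn yields that the whole class is a variety.

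For the first stage, I would begin by recording the consequences of Remark \ref{r1}c): conditions 3) and 4) together say exactly that $a\s b = \max\{c\in A : a\we c = b\we c\}$, so $\s$ is uniquely determined by $\we$ and behaves like the relative ``equalizer''. From this I would extract the basic arithmetic of $\s$: that $a\s b = b\s a$, that $c\le a\s b$ iff $a\we c = b\we c$, and monotonicity-type facts. Then, defining $a\Ra b := a\s(a\we b)$, I would verify the residuation equivalence $a\we c\le b \iff c\le b\Ra a$... more precisely the correct implicative-semilattice law $a\we b\le c \iff a\le b\Ra c$. The forward direction: if $a\we b\le c$ then $a\we b = a\we b\we c$, i.e. $b\we a = (b\we c)\we a$, so by 4) applied with the pair $(b, b\we c)$ we get $a\le b\s(b\we c) = b\Ra c$. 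The reverse direction: if $a\le b\Ra c = b\s(b\we c)$, then since $x\le b\s(b\we c)$ implies $b\we x = (b\we c)\we x$ (by 3) and the characterization), we get $b\we a = b\we c\we a\le c$, hence $a\we b\le c$. This establishes $(A,\we,\Ra,1)\in\IS$; one should also check $\Ra$ has $1$ as the expected greatest element, which follows since $(A,\we,1)$ is bounded.

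For the second stage, the key observation is that condition 4), being a quasi-identity (an implication between equations), is not obviously preserved under the usual class operations, whereas 4') is a genuine inequality, i.e. equivalent to the identity $c\we((a\we c)\s(b\we c))\we(a\s b) = c\we((a\we c)\s(b\we c))$. So I would show that, in the presence of 1), 2), 3), conditions 4) and 4') are equivalent. That 4) implies 4' is the easy direction: apply 4) to the triple with first two arguments $a\we c$, $b\we c$ and ``test element'' $c\we((a\we c)\s(b\we c))$ --- one must check $(a\we c)\we d = (b\we c)\we d$ where $d = c\we((a\we c)\s(b\we c))$, which holds because $(a\we c)\we((a\we c)\s(b\we c)) = (b\we c)\we((a\we c)\s(b\we c))$ by 3) and then meeting both sides with $c$ is harmless. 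The converse, 4') implies 4), is the step I expect to be the main obstacle: given $a\we c = b\we c$, one wants $c\le a\s b$, and 4') gives $c\we((a\we c)\s(b\we c))\le a\s b$; using $a\we c = b\we c$ we have $(a\we c)\s(b\we c) = (a\we c)\s(a\we c) = 1$ by condition 2), so the left side collapses to $c\we 1 = c$, giving $c\le a\s b$ as desired. The delicate point to get right is ensuring every use of ``$x\le y$ iff $x\we y = x$'' and of the characterization from Remark \ref{r1}c) is legitimate in this weaker setting, since we only have conditions 1)--4) and not the full strength of $\IS$ when proving the equivalence.

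Having established that 4) $\Leftrightarrow$ 4') modulo 1)--3), and that 4') is equational while 1)--3) are manifestly equational (1) is the equational axiomatization of bounded semilattices, 2) and 3) are identities), the class defined by 1), 2), 3), 4) coincides with the class defined by the identities 1), 2), 3), 4'), hence is a variety. I would close by remarking that this also shows $\ES$ --- if one identifies it with this class --- is a variety term-equivalent to $\IS$ via $\Ra$ and its inverse $a\s b = (a\Ra b)\we(b\Ra a)$, though strictly speaking the term-equivalence claim belongs to the surrounding discussion rather than to this proposition.
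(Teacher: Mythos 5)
Your proposal follows essentially the same route as the paper: prove the residuation law for $\Ra$ directly from 3) and 4), then show 4) and 4') are interderivable modulo 1)--3) and conclude the class is equationally definable. One step is misstated, though easily repaired: in the direction 4) $\Rightarrow$ 4') you say you apply 4) ``with first two arguments $a\we c$, $b\we c$'' and test element $d = c\we((a\we c)\s(b\we c))$; that instance would only yield the useless conclusion $d\leq (a\we c)\s(b\we c)$, whereas 4') requires $d\leq a\s b$, so 4) must be applied to the pair $(a,b)$ with test element $d$. Your verification $(a\we c)\we d=(b\we c)\we d$ is exactly the right computation (since $d\leq c$ it is literally the equation $a\we d=b\we d$), which is how the paper argues, so only the labelling of the instance needs correcting.
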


\begin{proof}
In order to prove that $(A,\we,\Ra,1)\in \IS$, we only need to
prove that for every $a,b,c\in A$, $a\leq b \Ra c$ if and only if
$a\we b \leq c$. Suppose that $a\leq b\Ra c$, i.e., $a\leq b \s
(b\we c)$. Then $a\we b \leq b \we (b \s (b\we c))$. It follows
from 3) that $b \we (b \s (b\we c)) = (b\we c) \we (b\s (b\we c))
\leq c$, so $a\we b \leq c$. Conversely, suppose that $a\we b \leq
c$. Then $a\we b = a \we (b\we c)$. Taking into account 4) we have
that $a\leq b \s (b\we c)$, i.e., $a\leq b\Ra c$.

Finally we will prove the equivalence between 4) and 4'). Assume
the condition 4). Since $a\we (c \we ((a\we c) \s (b\we c))) =
b\we (c \we ((a\we c) \s (b\we c)))$ then $c\we ((a\we c) \s (b\s
c)) \leq a\s b$, which is the condition 4'). Conversely, assume
the condition 4'), and suppose that $a\we c = b\we c$. It follows
from the conditions 2) and 4') that $c = c\we ((a\we c) \s (b\we
c)) \leq a\s b$, so $c\leq a \s b$.
\end{proof}

We write $\ES$ for the variety of algebras of type $(2,2,0)$ which
satisfy the conditions $1)$, $2)$, $3)$ and $4)$ of Proposition
\ref{isl1}. The following corollary follows from Proposition
\ref{isl1} and Remark \ref{r1}.

\begin{cor} \label{isl2}
\begin{enumerate} [\normalfont 1)]
\item If $(A,\we, \ra,1) \in \IS$ then $(A,\we,\rla,1) \in \ES$.
Moreover, for every $a,b\in A$ we have that $a\ra b = a\Ra b$,
where $\Ra$ is the implication associated to the algebra
$(A,\we,\rla,1)$. \item If $(A,\we,\s,1)\in \ES$ then
$(A,\we,\Ra,1) \in \IS$. Moreover, for every $a,b\in A$ we have
that $a\s b = a\Leftrightarrow b$. \item The varieties $\IS$ and
$\ES$ are term equivalent.
\end{enumerate}
\end{cor}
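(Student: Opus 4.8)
The plan is to prove Corollary~\ref{isl2} by assembling the three parts directly from Proposition~\ref{isl1} and Remark~\ref{r1}, each time checking that the two translations $\ra \mapsto \rla$ and $\s \mapsto \Ra$ are mutually inverse on the objects in play. For part~1), starting from $(A,\we,\ra,1)\in\IS$, I would verify the four defining conditions of $\ES$ for $(A,\we,\rla,1)$: condition~1) is immediate, condition~2) is $a\rla a = 1$ from Remark~\ref{r1}~a), and conditions~3) and~4) are exactly the characterization $c\leq a\rla b \iff a\we c = b\we c$ from Remark~\ref{r1}~a) (condition~3) is the ``$\leq$'' direction applied to $c = a\rla b$ together with $a\rla b \leq a\rla b$; condition~4) is the ``$\geq$'' direction). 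Then to show $a\ra b = a\Ra b$ where $a\Ra b := a\rla(a\we b)$, I would invoke Remark~\ref{r1}~b), which states precisely that implicative semilattices satisfy $a\ra b = a\rla(a\we b)$.

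For part~2), given $(A,\we,\s,1)\in\ES$, Proposition~\ref{isl1} already delivers $(A,\we,\Ra,1)\in\IS$ with $a\Ra b = a\s(a\we b)$, so the only new content is the identity $a\s b = a\Leftrightarrow b$, where $a\Leftrightarrow b := (a\Ra b)\we(b\Ra a)$ is the equivalence operation of the implicative semilattice just produced. Here I would argue by the max-characterization in Remark~\ref{r1}~c): in $\ES$, conditions~3) and~4) say that $a\s b = \max\{c : a\we c = b\we c\}$, while in the implicative semilattice $(A,\we,\Ra,1)$, Remark~\ref{r1}~a) gives $c\leq a\Leftrightarrow b \iff a\we c = b\we c$, so $a\Leftrightarrow b$ is also $\max\{c : a\we c = b\we c\}$; two elements that are both the maximum of the same set must be equal.

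Finally, part~3) is the formal consequence: parts~1) and~2) give functorial-looking assignments $\IS\to\ES$ and $\ES\to\IS$ that are term-definable (the defining terms being $\rla$ and $\Ra$), and I would close the loop by noting that composing them in either order returns the original operation. Going $\IS\to\ES\to\IS$: from $\ra$ we get $\rla$, then from $\rla$ we recover $a\mapsto a\rla(a\we b) = a\ra b$ by part~1)'s ``Moreover'' clause. Going $\ES\to\IS\to\ES$: from $\s$ we get $\Ra$, and part~2)'s ``Moreover'' clause (applied to the algebra $(A,\we,\Ra,1)\in\IS$, using part~1) on that algebra) gives that the equivalence of $\Ra$ is $a\Leftrightarrow b = a\s b$. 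Since the operations $\we$ and $1$ are untouched throughout, this exhibits $\IS$ and $\ES$ as term equivalent.

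I do not anticipate a genuine obstacle; the main thing to be careful about is bookkeeping---making sure that in each ``Moreover'' clause the symbol $\Ra$ (resp.\ $\Leftrightarrow$) refers to the operation built from the \emph{current} algebra and not a previously constructed one, and that the round-trip in part~3) is stated with the correct algebra as input to each of parts~1) and~2). The only mildly delicate point is the uniqueness-of-maximum argument in part~2), which relies on $A$ actually \emph{having} these maxima---but that is guaranteed since both $\s$ and $\Leftrightarrow$ are total operations on $A$ realizing them.
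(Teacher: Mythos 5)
Your proposal is correct and follows exactly the route the paper intends: the paper gives no written proof of Corollary~\ref{isl2}, stating only that it follows from Proposition~\ref{isl1} and Remark~\ref{r1}, and your argument fills in precisely those details (conditions 1)--4) of $\ES$ via Remark~\ref{r1}~a), the ``Moreover'' clauses via Remark~\ref{r1}~b) and the max-characterization, and the round-trip check for term equivalence). No gaps.
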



\section{Hemiimplicative semilattices and symmetric
hemiimplicative semilattices} \label{section3} In this section we
study the variety of hemiimplicative semilattices and some of its
subvarieties. In particular, we present some general examples by
defining hemiimplicative structures on any bounded semilattice. In
a second part we introduce a new variety, whose algebras will be
called for us \emph{symmetric hemiimplicative semilattices}. The
original motivation to consider this variety follows from the
properties of the algebras $(A,\we, \rla, 1)$ associated to the
algebras $(A,\we,\ra,1) \in \WIS$. \vspace{1pt}

There are several ways of defining a hemiimplicative structure on
any bounded semilattice. Some of this ways are described in the
examples below. Note that some of this procedures only apply to
bounded semilattices with bottom.

\begin{ex}\label{ejs1}
Let $(A, \we, 1)$ be a bounded semilattice (with bottom $0$, when
necessary). We define binary operations $\ra$ on $A$ that makes
the algebra $(A, \we, \ra, 1)$ a hemiimplicative semilattice.
\begin{equation}
\label{ej3} a \ra b = \left\{
\begin{array}{c}
1 \hskip10pt \textrm{if $a = b$}\\
0  \hskip10pt \textrm{if $a\neq b$}\\
\end{array}
\right.
\end{equation}

\begin{equation}
\label{ej6} a \ra b = \left\{
\begin{array}{c}
1 \hskip10pt \textrm{if $a \leq b$}\\
b  \hskip10pt \textrm{if $a\nleq b$}\\
\end{array}
\right.
\end{equation}
\end{ex}

In the context of algebras of $\WIS$ consider the following two
equations:
\begin{enumerate}
\item[$\mathrm{(H4)}$] $a\ra (a\we b) = a \ra b$,
\item[$\mathrm{(H5)}$] $(a \we b) \ra b = 1$.
\end{enumerate}

We write $\WISWf$ for the subvariety of $\WIS$ whose algebras
satisfy $\mathrm{(H4)}$, and $\WISWfi$ for the subvariety of
$\WIS$ whose algebras satisfy $\mathrm{(H5)}$.

\begin{rem}
Let $A\in \WISWf$ and $a\in A$. Then $a\ra 1 = 1$. It follows from
that $a\ra 1 = a \ra (a\we 1)$ and $a\ra a = 1$.
\end{rem}

\begin{prop} \label{ps}
The following inclusions of varieties are proper: $\WISWf
\subseteq \WISWfi \subseteq \WIS$.
\end{prop}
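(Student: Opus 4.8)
The plan is to prove the two inclusions and their properness separately. For the inclusion $\WISWf \subseteq \WISWfi$, I must show that (H4) implies (H5) within $\WIS$. Let $A \in \WISWf$ and take $a, b \in A$. Using (H4) applied to the pair $(a \we b, b)$, I get $(a\we b) \ra b = (a \we b) \ra ((a\we b) \we b) = (a\we b)\ra (a\we b) = 1$ by (H2); note $(a\we b)\we b = a\we b$ since $A$ is a semilattice. This gives (H5). The inclusion $\WISWfi \subseteq \WIS$ is immediate since $\WISWfi$ is by definition a subvariety of $\WIS$.

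For properness, I need two separating examples. To see that $\WISWfi \subsetneq \WIS$, I would use the operation in equation \eqref{ej3}, namely $a\ra b = 1$ if $a=b$ and $a\ra b = 0$ otherwise, on any bounded semilattice with bottom $0$ having at least two elements $a < b$ (so that $a\we b = a \neq b$). Here $(a\we b)\ra b = a \ra b = 0 \neq 1$ (assuming $0\neq 1$), so (H5) fails; this algebra lies in $\WIS$ (one checks (H1)--(H3) directly, which follows from Example \ref{ejs1}). For the properness of $\WISWf \subsetneq \WISWfi$, I need an algebra satisfying (H5) but not (H4). A natural candidate is the operation in equation \eqref{ej6}, namely $a\ra b = 1$ if $a\leq b$ and $a\ra b = b$ otherwise. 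It satisfies (H5) trivially since $a\we b \leq b$ always forces $(a\we b)\ra b = 1$; and it lies in $\WIS$ by Example \ref{ejs1}. To break (H4), I would look for $a,b$ with $a\nleq b$ and $a\nleq a\we b$: then $a\ra b = b$ while $a\ra(a\we b) = a\we b$, and these differ whenever $b \neq a\we b$, i.e., whenever $b\nleq a$. So I need a semilattice with two incomparable elements $a, b$; the three-element semilattice $\{0, a, b, 1\}$ — or even $\{a, b, a\we b\}$ suitably bounded — does the job, since then $a\ra b = b \neq a\we b = a\ra(a\we b)$.

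The main obstacle, such as it is, is purely bookkeeping: for each separating example I must confirm it genuinely belongs to the larger variety (i.e., satisfies (H1), (H2), (H3), and in the second case also (H5)), and this relies on Example \ref{ejs1} having established that \eqref{ej3} and \eqref{ej6} define hemiimplicative semilattices. For \eqref{ej6}, verifying (H3) amounts to checking $a\we(a\ra b)\leq b$: if $a\leq b$ this is $a\we 1 = a\leq b$, and if $a\nleq b$ this is $a\we b \leq b$, both clear. For \eqref{ej3}, $a\we(a\ra b)$ is $a$ when $a=b$ (so $\leq b$) and $a\we 0 = 0\leq b$ otherwise. There is a minor subtlety: the examples with bottom require $|A|\geq 2$ so that $0\neq 1$; I will simply choose the carriers large enough. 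No step presents a real difficulty; the proposition is essentially a matter of assembling the definitional implication (H4)$\Rightarrow$(H5) with two small counterexamples drawn from Example \ref{ejs1}.
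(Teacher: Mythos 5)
Your proof is correct and follows essentially the same route as the paper: the same computation $(a\we b)\ra b=(a\we b)\ra((a\we b)\we b)=1$ via (H4) and (H2) for the inclusion, and the same two separating implications, \eqref{ej3} for $\WISWfi\subsetneq\WIS$ and \eqref{ej6} on a semilattice with two incomparable elements (the paper uses $B_4$) for $\WISWf\subsetneq\WISWfi$. The only blemish is the slip calling $\{0,a,b,1\}$ a three-element semilattice; the mathematics is unaffected.
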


\begin{proof}
In order to prove that $\WISWf$ is a subvariety of $\WISWfi$, let
$A\in \WISWf$ and let $a,b\in A$. Then $(a\we b) \ra b = (a\we b)
\ra ((a\we b)\we b)$. But $(a\we b)\we b = a\we b$ and $(a\we b)
\ra (a\we b) = 1$. Then $(a\ra b)\ra b = 1$, so $A \in \WISWfi$.
In order to show that $\WISWf$ is a proper subvariety of
$\WISWfi$, consider the boolean lattice $B_4$ of four elements,
where $x$ and $y$ are the atoms, and consider the implication
given in (\ref{ej6}) of Example \ref{ejs1}. Then $(a\we b) \ra b =
1$ for every $a,b$. However, $x\ra (x\we y) = x\ra 0 = 0$ and
$x\ra y = y$, so $x\ra (x\we y) \neq x\ra y$. Thus, $\WISWf$ is a
proper subvariety of $\WISWfi$.

Finally we shall show that $\WISWfi$ is a proper subvariety of
$\WIS$. Consider $B_4$ with the implication given in (\ref{ej3})
of Example \ref{ejs1}. Then $(x\we y) \ra y = 0$. Therefore, the
equation $\mathrm{(H5)}$ is not satisfied.
\end{proof}

Let $A\in \WIS$ and $a,b\in A$. Notice that if $a\ra b = 1$ then
$a\leq b$ because $a = a\we 1 = a\we (a\ra b) \leq b$. In the
following corollary we characterize the hemiimplicative
semilattices in which holds the converse property, that is to say,
the condition if $a\leq b$ then $a\ra b = 1$.

\begin{cor}
Let $A\in \WIS$. The following conditions are equivalent:
\begin{enumerate}[\normalfont 1)]
\item $A\in \WISWfi$. \item For every $a,b\in A$, $a\leq b$ if and
only if $a\ra b = 1$.
\end{enumerate}
\end{cor}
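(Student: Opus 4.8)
The plan is to prove the two implications separately. For $2)\Rightarrow 1)$, the task is to verify the identity $\mathrm{(H5)}$, namely $(a\we b)\ra b = 1$ for all $a,b\in A$. This is immediate from the hypothesis: since $a\we b\leq b$ always holds in a semilattice, condition $2)$ (in the ``only if'' direction) gives $(a\we b)\ra b = 1$. Hence $A\in\WISWfi$. This direction requires essentially no work.

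For $1)\Rightarrow 2)$, assume $A\in\WISWfi$ and fix $a,b\in A$. One direction of the biconditional in $2)$ is already available from the discussion preceding the corollary: in any $A\in\WIS$, if $a\ra b = 1$ then $a = a\we 1 = a\we(a\ra b)\leq b$ by $\mathrm{(H3)}$. So it remains only to prove the converse, that $a\leq b$ implies $a\ra b = 1$. The key observation is that $a\leq b$ is equivalent to $a = a\we b$; substituting $a\we b$ for $a$ in $\mathrm{(H5)}$ — which says $(a\we b)\ra b = 1$ — and using $a = a\we b$ yields $a\ra b = 1$ directly.

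There is no real obstacle here: the corollary is a direct repackaging of $\mathrm{(H5)}$ together with the elementary fact (already noted in the paragraph introducing the corollary) that $a\ra b = 1$ forces $a\leq b$ via (H3). If a fully formal write-up is desired, the only minor care needed is to state explicitly the semilattice identity $a\leq b\Leftrightarrow a = a\we b$, which is part of $\mathrm{(H1)}$, and to apply $\mathrm{(H5)}$ as an identity (so it may be instantiated at $a\we b$ in place of $a$). Both implications together then establish the equivalence.
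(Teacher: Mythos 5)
Your proof is correct and follows essentially the same route as the paper: both directions reduce to instantiating $\mathrm{(H5)}$ at $a\we b$ (using $a\leq b\Leftrightarrow a=a\we b$) and invoking the previously noted fact that $a\ra b=1$ forces $a\leq b$ in any hemiimplicative semilattice. No differences worth noting.
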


\begin{proof}
Suppose that $A\in \WISWfi$. Let $a\leq b$. Then $1 = (a\we b) \ra
b = a\ra b$. Conversely, suppose that for every $a,b\in A$, $a\leq
b$ if and only if $a\ra b = 1$. Since $a\we b \leq b$ then $(a\we
b)\ra b = 1$. Therefore, $A\in \WISWfi$.
\end{proof}

\begin{ex} \label{ejs2}
Let $(A, \we, 1)$ be a bounded semilattice (with bottom $0$, when
necessary). We define binary operations $\ra$ on $A$ that makes
the algebra $(A, \we, \ra, 1)$ a hemiimplicative semilattice.

\begin{equation}
\label{ej1} a \ra b = \left\{
\begin{array}{c}
1 \hskip10pt \textrm{if $a = b$}\\
b  \hskip10pt \textrm{if $a\neq b$}\\
\end{array}
\right.
\end{equation}

\begin{equation}
\label{ej2} a \ra b = \left\{
\begin{array}{c}
1 \hskip28pt \textrm{if $a \leq b$}\\
a\we b  \hskip10pt \textrm{if $a\nleq b$}\\
\end{array}
\right.
\end{equation}

\begin{equation}
\label{ej4} a \ra b = \left\{
\begin{array}{c}
1 \hskip28pt \textrm{if $a = b$}\\
a\we b  \hskip10pt \textrm{if $a\neq b$}\\
\end{array}
\right.
\end{equation}

\begin{equation}
\label{ej5} a \ra b = \left\{
\begin{array}{c}
1 \hskip10pt \textrm{if $a \leq b$}\\
0  \hskip10pt \textrm{if $a\nleq b$}\\
\end{array}
\right.
\end{equation}
\end{ex}

In the examples \ref{ejs1} and \ref{ejs2} we define a binary
operation that makes an algebra a hemiimplicative semilattice. In
the rest of the paper we shall refer to this operation as the
implication of the algebra.

\begin{rem} \label{ejb}
The algebras with the implications (\ref{ej2}) and (\ref{ej5}) of
Example \ref{ejs2} satisfy $\mathrm{(H4)}$. The algebras with the
implication (\ref{ej6}) of Example \ref{ejs1} satisfy
$\mathrm{(H5)}$, and the algebras with the implication (\ref{ej1})
of Example \ref{ejs2} also satisfy $\mathrm{(H5)}$. For the
algebras with the implication (\ref{ej3}) of Example \ref{ejs1}
and by (\ref{ej4}) of Example \ref{ejs2}, if the universe of them
is not trivial, then $\mathrm{(H5)}$ is not satisfied because
$(0\we 1)\ra 1 = 0$. For the algebras with the implication
(\ref{ej6}) of Example \ref{ejs1} and with the implication
(\ref{ej1}) of Example \ref{ejs2} we have that $1\ra (1\we 0) = 0$
and $1\ra 1=1$, so $\mathrm{(H4)}$ is not satisfied by them.
\end{rem}

\begin{lem} \label{lm}
Let $A\in \WIS$ and $a,b\in A$. Then $a\rla b = b \rla a$, $a\rla
a = 1$ and $a\we (a\rla b) \leq b$.
\end{lem}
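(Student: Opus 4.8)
The plan is to verify the three asserted properties of $\rla$ directly from the definition $a \rla b := (a \ra b) \we (b \ra a)$, using only the semilattice axioms (H1), the identity (H2), and the hemiresidual condition (H3) — equivalently, the inequality $a \we (a \ra b) \leq b$ from Remark \ref{mp}.

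\begin{proof}
Let $a,b \in A$. For the symmetry, since $\we$ is commutative we have
$a \rla b = (a \ra b) \we (b \ra a) = (b \ra a) \we (a \ra b) = b \rla a$.
For $a \rla a = 1$, by (H2) we have $a \ra a = 1$, hence $a \rla a = (a \ra a) \we (a \ra a) = 1 \we 1 = 1$.
Finally, for the inequality $a \we (a \rla b) \leq b$: since $a \rla b = (a \ra b) \we (b \ra a) \leq a \ra b$, monotonicity of $\we$ gives $a \we (a \rla b) \leq a \we (a \ra b)$. By Remark \ref{mp} (which is available since $A \in \WIS$ satisfies (H3)), we have $a \we (a \ra b) \leq b$, and transitivity of $\leq$ yields $a \we (a \rla b) \leq b$.
\end{proof}

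None of the steps presents a real obstacle: the only point requiring a moment's care is recognizing that condition (H3) is exactly the hypothesis needed to invoke Remark \ref{mp} with $\cd = \we$, so that $a \we (a \ra b) \leq b$ holds in every hemiimplicative semilattice; the rest is a routine use of commutativity, monotonicity, and transitivity in the underlying bounded semilattice. It is worth noting that this lemma is precisely what justifies the claim in the abstract that $(A, \we, \s, 1)$ is again a hemiimplicative semilattice (with $\s = \rla$): properties one would need are (H1), which is inherited, together with $a \rla a = 1$ (the analogue of (H2)) and $a \we (a \rla b) \leq b$ (the analogue of (H3), again via Remark \ref{mp}), all of which are established here, plus the extra symmetry identity $a \rla b = b \rla a$.
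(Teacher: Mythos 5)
Your proof is correct; the paper states Lemma \ref{lm} without proof, and your argument is exactly the routine verification being left implicit: commutativity of $\we$ for symmetry, (H2) for $a\rla a=1$, and $a\rla b\leq a\ra b$ combined with $a\we(a\ra b)\leq b$ (the semilattice instance of Remark \ref{mp}) for the last inequality.
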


Inspired by Lemma \ref{lm} we introduce the following variety of
algebras.

\begin{defn}
We say that $(A,\s,\we,1)$ is a \emph{symmetric hemiimplicative
semilattice} if $(A,\we,\s,1)\in \WIS$ and $a\s b = b\s a$ for
every $a,b\in A$. We write $\gEA$ for the variety of symmetric
hemiimplicative semilattices.
\end{defn}

Let $A\in \gEA$. Define the binary operation $\Ra$ by $a\Ra b: = a
\s (a\we b)$.

\begin{lem}
Let $A\in \gEA$ and $a,b\in A$. Then
\begin{enumerate}[\normalfont 1)]
\item $a\Ra a = 1$, \item $a\we (a\Ra b) \leq b$, \item $a\Ra
(a\we b) = a\Ra b$, \item If $b\leq a$ then $a \RLa b = a\Ra b = a
\s b$.
\end{enumerate}
\end{lem}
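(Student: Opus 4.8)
The plan is to work directly from the definition $a \Ra b := a \s (a \we b)$, using only the defining properties of $\gEA$: that $(A,\we,\s,1)\in\WIS$ (so $\s$ satisfies (H1)--(H3)) together with the commutativity $a\s b = b\s a$. For item 1), I would observe that $a\Ra a = a\s(a\we a) = a\s a = 1$ by (H2) applied to $\s$ and idempotency of $\we$. For item 2), I would use that $(A,\we,\s,1)\in\WIS$ satisfies the reformulation of (H3) noted in the excerpt, namely $x\we(x\s y)\le y$ for all $x,y$; instantiating with $x=a$ and $y=a\we b$ gives $a\we(a\s(a\we b))\le a\we b\le b$, i.e. $a\we(a\Ra b)\le b$.

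For item 3), I need $a\Ra(a\we b) = a\Ra b$, which unfolds to $a\s(a\we(a\we b)) = a\s(a\we b)$; since $a\we(a\we b) = a\we b$ by associativity/idempotency of $\we$, both sides are literally the same term, so this is immediate. For item 4), assume $b\le a$, so $a\we b = b$. Then $a\Ra b = a\s(a\we b) = a\s b$ directly, and by commutativity $a\s b = b\s a = a\rla b$ using the notation $x\rla y := (x\s y)\we(y\s x)$ (here applied with $\s$ as the implication of the algebra, so $a\rla b = (a\s b)\we(b\s a) = a\s b$ since the two conjuncts coincide). Thus $a\RLa b = a\Ra b = a\s b$ all collapse to $a\s b$.

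None of the four items presents a real obstacle; the only point requiring a little care is item 4), where one must be careful about what $\RLa$ denotes in this context. Since the algebra's basic operation is already called $\s$, the derived operation $a\rla b = (a\s b)\we(b\s a)$ coincides with $a\s b$ precisely because of commutativity, and when $b\le a$ this also equals $a\Ra b$ by the computation above; so the three quantities are equal for the trivial reason that $a\we b = b$ reduces $\Ra$ to $\s$ and commutativity reduces $\rla$ to $\s$. I would present items 1)--3) in one or two lines each and spend the bulk of the argument making the identifications in item 4) explicit.
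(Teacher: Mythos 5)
Your items 1)--3) are correct and coincide with the paper's proof essentially line for line. The problem is in item 4), and it is exactly at the point you flagged as needing care: you have resolved the notation $\RLa$ the wrong way. In this paper the uppercase biconditional is built from the uppercase implication, i.e.\ $a \RLa b := (a\Ra b)\we(b\Ra a)$ --- this is the convention of Corollary \ref{isl2}, where $a\Leftrightarrow b$ denotes the derived biconditional of $\Ra$, and it is what the paper's own proof computes. Your reading $a\RLa b = (a\s b)\we(b\s a)$ is instead the lowercase $\rla$ of the algebra $(A,\we,\s,1)$, and the identity you actually prove, $(a\s b)\we(b\s a)=a\s b$, holds for \emph{all} $a,b$ by commutativity alone; the fact that this part of your argument never uses the hypothesis $b\le a$ is a signal that it is not the statement being asked for.

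What is needed is $(a\Ra b)\we(b\Ra a)=a\s b$ under the hypothesis $b\le a$, and the missing ingredient is the second conjunct: $b\Ra a = b\s(b\we a) = b\s b = 1$ because $b\le a$. With that,
\[
a\RLa b = (a\Ra b)\we(b\Ra a) = (a\s(a\we b))\we 1 = a\s b = a\Ra b,
\]
which is the paper's computation. The repair is one line, but as written your item 4) establishes a different (unconditional) identity rather than the claim about $\RLa$.
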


\begin{proof}
Let $a,b\in A$. Then $a\Ra a = a\ra a = 1$, so $a\Ra a = 1$.
Besides $a\we (a\Ra b) = a \we (a\s (a\we b))\leq a\we b \leq b$.
By definition of $\Ra$ we have that $a\Ra (a\we b) = a\Ra b$.

Finally we shall prove that if $b\leq a$ then $a \RLa b = a\Ra b =
a \s b$. In order to show it, let $b\leq a$. We have that

\[
\begin{array}
[c]{lllll}
a\RLa b & = & (a\Ra b) \we (b\Ra a)&  & \\
 & = &  (a\s (a\we b)) \we (b \s (b\we a))&  & \\
 & = & (a\s b) \we (b\s b) &  &\\
 & = & (a\s b) \we 1 &  &\\
 & = & a\s b. &  &
\end{array}
\]

Moreover, $a\s b = a\s (a\we b) = a\Ra b$. Thus we obtain that if
$b\leq a$ then $a\RLa b = a\s b = a\Ra b$.
\end{proof}

In what follows we establish the relation between the varieties
$\WIS$ and $\gEA$.

\begin{prop}
\begin{enumerate}[\normalfont 1)]
\item If $(A,\ra, \we,1) \in \WIS$ then $(A,\we,\rla,1) \in \gEA$.
\item If $(A,\ra, \we,1) \in \WISWf$ then $a \ra b = a \Ra b$ for
every $a,b\in A$, where $\Ra$ is the implication associated to the
algebra $(A,\we,\rla,1)$. \item If $(A,\s,\we,1)\in \gEA$ then
$(A,\Ra,\we,1) \in \WISWf$.
\end{enumerate}
\end{prop}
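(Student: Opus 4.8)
The plan is to verify each of the three items in turn, leaning on the lemmas already established for $\gEA$ and the characterizations from Section \ref{basic}. For item 1), I would invoke Lemma \ref{lm}: given $(A,\we,\ra,1)\in\WIS$, that lemma states precisely that $a\rla b=b\rla a$, $a\rla a=1$ and $a\we(a\rla b)\le b$. The first identity is the symmetry axiom, and together with (H1) (inherited unchanged since the underlying semilattice is the same) the remaining two facts give (H2) and (H3) (using Remark \ref{mp}'s reformulation of (H3) as $a\we(a\s b)\le b$). Hence $(A,\we,\rla,1)\in\gEA$.

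For item 3), suppose $(A,\s,\we,1)\in\gEA$ and set $a\Ra b:=a\s(a\we b)$. The preceding Lemma (on $\gEA$) already gives $a\Ra a=1$, $a\we(a\Ra b)\le b$ and $a\Ra(a\we b)=a\Ra b$. The first two say $(A,\we,\Ra,1)$ satisfies (H2) and (the Remark \ref{mp} form of) (H3), so it lies in $\WIS$; the third is exactly (H4). Therefore $(A,\Ra,\we,1)\in\WISWf$. Here I should double-check that no use of the full residuation of $\s$ is smuggled in — only the three listed consequences are needed, and each was proved in that Lemma using just the $\gEA$ axioms, so this is safe.

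Item 2) is the one requiring genuine work. We start with $(A,\ra,\we,1)\in\WISWf$, form $\s:=\rla$, and let $\Ra$ be the derived implication $a\Ra b=a\s(a\we b)=(a\ra(a\we b))\we((a\we b)\ra a)$. Using (H4), $a\ra(a\we b)=a\ra b$; and using (H5) — which holds in $\WISWf\subseteq\WISWfi$ by Proposition \ref{ps} — we get $(a\we b)\ra a=1$ since $a\we b\le a$. Hence $a\Ra b=(a\ra b)\we 1=a\ra b$, which is the claim. The main obstacle, and the point to be careful about, is the appeal to (H5): it is not an axiom of $\WISWf$ directly but follows from the inclusion $\WISWf\subseteq\WISWfi$ established in Proposition \ref{ps}, so I would state that dependency explicitly rather than assuming $(a\we b)\ra a=1$ out of thin air. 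Once that is in place the computation is a two-line substitution, so I expect item 2) to be the heart of the proof but still short.
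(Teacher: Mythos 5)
Your proof is correct and follows exactly the route the paper intends: the paper states this proposition without proof, but the surrounding material (Lemma \ref{lm} for item 1, the lemma on $\gEA$ for item 3, and (H4) together with the consequence $(a\we b)\ra a=1$ of Proposition \ref{ps} for item 2) is precisely what you assemble. Your explicit flagging of the dependence on $\WISWf\subseteq\WISWfi$ in item 2 is sound (one could also get $(a\we b)\ra a=1$ directly from (H4) as in the proof of Proposition \ref{ps}), and nothing further is needed.
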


We write $\gEAE$ for the subvariety of $\gEA$ whose algebras
satisfy the following condition:
\begin{enumerate}
\item[$\mathrm{(S)}$] $a\s b = (a\s (a\we b)) \we (b\s (a\we b))$.
\end{enumerate}

Equation $\mathrm{(S)}$ simply states that in $(A,\s,\we,1)\in
\gEAE$, we have that $a \RLa b = a\s b$ for every $a,b\in A$,
where $\Ra$ is the implication associated to the algebra
$(A,\we,\s,1)$.

\begin{cor} \label{termeq}
The varieties $\WISWf$ and $\gEAE$ are term equivalent.
\end{cor}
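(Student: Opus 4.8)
The plan is to establish the term equivalence $\WISWf \equiv \gEAE$ by exhibiting the two translations and showing they are mutually inverse, reusing the previous proposition almost entirely. In one direction, starting from $(A,\we,\ra,1)\in\WISWf$, we pass to $(A,\we,\rla,1)$, which lies in $\gEA$ by part 1) of the previous proposition; we must check that it in fact lies in $\gEAE$, i.e. that it satisfies $\mathrm{(S)}$. In the other direction, starting from $(A,\we,\s,1)\in\gEAE$, we pass to $(A,\we,\Ra,1)$ with $a\Ra b := a\s(a\we b)$, which lies in $\WISWf$ by part 3); here $\mathrm{(H4)}$ comes for free from clause 3) of the lemma on $\gEA$. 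The core of the corollary is then the two round-trip identities: $(\ra)\mapsto(\rla)\mapsto(\Ra)$ recovers $\ra$, and $(\s)\mapsto(\Ra)\mapsto(\rla)$ recovers $\s$.

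First I would prove that the passage $\ra\mapsto\rla$ from $\WISWf$ lands in $\gEAE$. The required identity $\mathrm{(S)}$ reads $a\rla b = (a\rla(a\we b))\we(b\rla(a\we b))$. Since $a\we b\leq a$ and $a\we b\leq b$, the lemma preceding this corollary (applied in the form of its part 4), noting $\rla$ is symmetric and $a\Ra b = a\rla(a\we b)$ when $\rla$ is used as $\s$) gives $a\rla(a\we b) = a\ra(a\we b)$ and $b\rla(a\we b) = b\ra(a\we b)$. Using $\mathrm{(H4)}$, $a\ra(a\we b) = a\ra b$ and $b\ra(a\we b) = b\ra b = 1$, so the right-hand side collapses to $(a\ra b)\we 1 = a\ra b$; but wait — this would give $a\rla b = a\ra b$, which is false in general, so I must instead expand $a\rla b = (a\ra b)\we(b\ra a)$ directly and match it term by term with the right-hand side after applying part 2) of the previous proposition, which already tells us $a\ra b = a\Ra b = a\rla(a\we b)$ in $\WISWf$. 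By the symmetric reasoning $b\ra a = b\rla(b\we a) = b\rla(a\we b)$, and combining the two yields exactly $\mathrm{(S)}$. This is the step I expect to be the main obstacle, since it requires carefully tracking which implication is being symmetrized and invoking $\mathrm{(H4)}$ in the right place.

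Next I would verify the round trips. For $(A,\we,\ra,1)\in\WISWf$: the associated symmetric operation is $\rla$, and the implication reconstructed from it via the $\gEA$-recipe is $a\mapsto a\rla(a\we b)$; part 2) of the previous proposition says precisely that this equals $a\ra b$, so the first round trip is the identity. For $(A,\we,\s,1)\in\gEAE$: we form $a\Ra b = a\s(a\we b)$, then symmetrize to get $a\Ra b\we b\Ra a = (a\s(a\we b))\we(b\s(b\we a)) = (a\s(a\we b))\we(b\s(a\we b))$, which is exactly the right-hand side of $\mathrm{(S)}$, hence equal to $a\s b$; so the second round trip is the identity as well. Finally I would note that since every equation is preserved by the translations (they are given by terms) and the translations are mutually inverse, the two varieties are term equivalent, which is the assertion. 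The write-up is essentially the bookkeeping: cite parts 1), 2), 3) of the preceding proposition, the clause 3) of the $\gEA$-lemma, the defining equation $\mathrm{(S)}$, and the remark that $\mathrm{(S)}$ is equivalent to $a\RLa b = a\s b$.
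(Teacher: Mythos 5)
Your proposal is correct and follows essentially the same route the paper intends: the corollary is stated without proof as an immediate consequence of the preceding proposition together with the observation that $\mathrm{(S)}$ says $a\RLa b = a\s b$, and your two round-trip verifications (using part 2) of the proposition for one direction and equation $\mathrm{(S)}$ for the other) are exactly the details being elided. The false start you flag and discard indeed fails because $\mathrm{(H4)}$ gives $b\ra(b\we a)=b\ra a$ rather than $b\ra b$, but since you abandon that route and argue via $a\ra b = a\rla(a\we b)$ and $b\ra a = b\rla(a\we b)$, the final argument stands.
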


It is the case that $\gEAE$ is a proper subvariety of $\gEA$, as
the following example shows.

\begin{ex}
Let $A$ be the bounded lattice of the following figure:
\[
\xymatrix{
   & \bullet 1 \ar@{-}\\
\ar@{-}[ur] \bullet a &  & \ar@{-}[ul] \bullet b\\
   & \ar@{-}[ul] \bullet c \ar@{-}[ur]&\\
   & \bullet 0 \ar@{-}[u]
}
\]
Define on $A$ the following binary operation:
 \[%
\begin{tabular}
[c]{c|ccccc}%
$\s$ & $0$ & $a$ & $b$ & $c$ & $1$\\\hline
$0$   & $1$ & $0$ & $0$ & $0$ & $0$\\
$a$   & $0$ & $1$ & $0$ & $c$ & $a$\\
$b$   & $0$ & $0$ & $1$ & $c$ & $b$\\
$c$   & $0$ & $c$ & $c$ & $1$ & $c$\\
$1$   & $0$ & $a$ & $b$ & $c$ & $1$%
\end{tabular}
\]

Straightforward computations show that $(A,\we,\s,1)\in \gEA$.
However, $a\s b = 0$, $a\s (a\we b) = c$ and $b\s (a\we b) = c$.
Thus, we obtain that $a\s b \neq (a\s (a\we b))\we (b\s (a\we
b))$. Therefore, $(A,\we,\s,1)\notin \gEAE$.
\end{ex}

\begin{rem}
For $i=1,\dots,6$, let $\mathrm{K}_i$ be the class of the algebras
in $\WIS$ where the implication is given by (i) of
examples \ref{ejs1} and \ref{ejs2}. It can be proved that these
classes are not closed by products; hence, they are not quasivarieties.
It would be interesting to
have an answer for the following general question: which is a set
of equations (quasiequations) for the variety (quasivariety)
generated by $\mathrm{K}_i$?
\end{rem}

\section{Congruences}
\label{section4}

In this section we study the congruences for some subclasses of
$\WIS$. More precisely, in Subsection \ref{sub1} we study the
lattice of congruences for the algebras given in examples
\ref{ejs1} and \ref{ejs2}. In Subsection \ref{sub2} we
characterize the principal congruences in $\WIS$.
\vspace{1pt}

In what follows we fix notation and we give some definitions. We
also give some known results about congruences on hemiimplicative
semilattices \cite{SM2}. Let $A\in \WIS$, $a,b \in A$ and $\theta$ a
congruence of $A$. We write $a/\theta$ to indicate the equivalence
class of $a$ associated to the congruence $\theta$ and
$\theta(a,b)$ for the congruence generated by the pair $(a,b)$.
Let $A\in \WIS$ or $A\in \gEA$. As usual, we say that $F$ is a
\emph{filter} if it is a subset of $A$ such satisfies the
following conditions: $1\in F$, if $a,b\in F$ then $a\we b \in F$,
if $a\in F$ and $a\leq b$ then $b\in F$. We also consider the
binary relation
\[
\Theta(F) = \{(a,b) \in A\times A: a\we f = y \we f\;\text{for
some}\; f\in F\}.
\]
Notice that if $A$ is a semilattice with greatest element (or a
bounded semilattice) and $F$ is a filter, then $\Theta(F)$ is the
congruence associated to the filter $F$. For $A\in \WIS$ and
$a,b,f\in A$ we define the following element of $A$: $t(a,b,f): =
(a \ra b) \rla ((a\we f) \ra (b\we f))$.

\begin{defn}
Let $A\in \WIS$ and $F$ a filter of $A$. We say that $F$ is a
\emph{congruent filter} if $t(a,b,f)\in F$ for every $a,b \in A$
and $f\in F$.
\end{defn}

The following result was proved in \cite{SM2}.

\begin{thm}
Let $A\in \WIS$. There exists an isomorphism between the lattice
of congruences of $A$ and the lattice of congruent filters of $A$,
which is established via the assignments $\theta \mapsto 1/\theta$
and $F\mapsto \Theta(F)$.
\end{thm}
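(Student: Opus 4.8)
The plan is to check that the two assignments are well defined, mutually inverse, and monotone for the inclusion orders; since an order isomorphism whose domain is a complete lattice is automatically a lattice isomorphism and $\mathrm{Con}(A)$ is a complete lattice, this suffices. The engine of the whole argument is the identity
\[
x\we(x\rla y)=y\we(x\rla y)\qquad(x,y\in A),
\]
which follows from Lemma \ref{lm}: $x\we(x\rla y)\leq y$ and, using $x\rla y=y\rla x$, also $y\we(x\rla y)\leq x$, so both sides equal $x\we y\we(x\rla y)$. From this I extract observation $(\ast)$: if $F$ is a filter and $x\rla y\in F$, then $(x,y)\in\Theta(F)$, with witness $f=x\rla y\in F$.

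Step 1 (the maps are well defined). For a congruence $\theta$, that $1/\theta$ is a filter is routine ($1\in 1/\theta$; closure under $\we$; if $a\in 1/\theta$ and $a\leq b$ then $b=b\we1\equiv b\we a=a\equiv1$), and it is a \emph{congruent} filter because $f\in 1/\theta$ forces $a\we f\equiv a$ and $b\we f\equiv b$, whence $t(a,b,f)=(a\ra b)\rla((a\we f)\ra(b\we f))\equiv(a\ra b)\rla(a\ra b)=1$. For a congruent filter $F$, $\Theta(F)$ is an equivalence relation compatible with $\we$ for any filter (reflexivity from $1\in F$; transitivity and $\we$-compatibility from $f\we g\in F$). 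The substantive point is compatibility with $\ra$: given $(a_1,a_2),(b_1,b_2)\in\Theta(F)$, pick a common witness $h\in F$, so $a_1\we h=a_2\we h$ and $b_1\we h=b_2\we h$; since $F$ is congruent, $t(a_i,b_i,h)\in F$, so $(\ast)$ gives $(a_i\ra b_i,\,(a_i\we h)\ra(b_i\we h))\in\Theta(F)$ for $i=1,2$, while $(a_1\we h)\ra(b_1\we h)=(a_2\we h)\ra(b_2\we h)$ literally; transitivity then yields $(a_1\ra b_1,a_2\ra b_2)\in\Theta(F)$.

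Step 2 (the composites are identities, plus monotonicity). If $\theta\in\mathrm{Con}(A)$: $(a,b)\in\theta$ gives $a\rla b\equiv a\rla a=1$, so $a\rla b\in 1/\theta$ and $(\ast)$ gives $(a,b)\in\Theta(1/\theta)$; conversely a witness $f\in 1/\theta$ gives $a=a\we1\equiv a\we f=b\we f\equiv b$. So $\Theta(1/\theta)=\theta$. If $F$ is a congruent filter: $a\in F$ yields $(a,1)\in\Theta(F)$ from $a\we a=1\we a$; conversely $(a,1)\in\Theta(F)$ means $a\we f=f$ for some $f\in F$, so $f\leq a$ and $a\in F$ by upward closure. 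So $1/\Theta(F)=F$. Finally $\theta\subseteq\theta'$ implies $1/\theta\subseteq1/\theta'$ and $F\subseteq F'$ implies $\Theta(F)\subseteq\Theta(F')$, so the bijection is an order isomorphism, hence a lattice isomorphism.

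The only step where any care is needed is the $\ra$-compatibility of $\Theta(F)$, and even there everything is forced once one has $(\ast)$ together with the trick of passing to a common witness $h$ so that $(a_i\we h)\ra(b_i\we h)$ collapses to a single element independent of $i$; the rest is routine bookkeeping with filters and congruences.
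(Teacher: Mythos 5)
Your proof is correct and complete. Note that the paper itself gives no proof of this theorem --- it is quoted from \cite{SM2} --- so there is no internal argument to compare against; your write-up supplies a self-contained verification. All the key points check out: the identity $x\we(x\rla y)=y\we(x\rla y)$ does follow from Lemma \ref{lm} as you say, the passage to a common witness $h$ correctly reduces $\ra$-compatibility of $\Theta(F)$ to the defining property of congruent filters via $(\ast)$, and the two composites are verified exactly as needed; the only cosmetic remark is that an order isomorphism between lattices is automatically a lattice isomorphism without invoking completeness.
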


Notice that if $(A,\we,\s,1)$ is a symmetric hemiimplicative
semilattice and $a,b,f \in F$ then $t(a,b,f) = (a\s b) \s ((a\we f) \s
(b\we f))$.

\begin{cor}
Let $A \in \gEA$. There exists an isomorphism between the lattice
of congruences of $A$ and the lattice of filters $F$ of $A$ which
satisfy $(a\s b) \s ((a\we f) \s (b\we f)) \in F$ for every
$a,b\in A$ and $f\in F$. The isomorphism is established via the
assignments $\theta \mapsto 1/\theta$ and $F\mapsto \Theta(F)$.
\end{cor}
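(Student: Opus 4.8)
The plan is to obtain this corollary as an immediate specialization of the preceding theorem for $\WIS$ to the subvariety $\gEA$. Since $A\in\gEA$ means precisely that $(A,\we,\s,1)\in\WIS$ with $\s$ commutative, the notion of filter of $A$ and the relation $\Theta(F)$ are already exactly the ones appearing in that theorem (with the role of $\ra$ played by $\s$). Hence the only point that needs checking is that, for such an $A$, the defining condition of a \emph{congruent filter} collapses to the condition stated in the corollary.

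First I would record the elementary identity $a\rla b = a\s b$, valid in every symmetric hemiimplicative semilattice: by definition $a\rla b = (a\s b)\we(b\s a)$, and since $\s$ is commutative this equals $(a\s b)\we(a\s b) = a\s b$. Substituting $\ra := \s$ into $t(a,b,f) = (a\ra b)\rla((a\we f)\ra(b\we f))$ and applying this identity then gives $t(a,b,f) = (a\s b)\s((a\we f)\s(b\we f))$, which is exactly the remark made just before the corollary. Consequently a filter $F$ of $A$ is a congruent filter of $A$, regarded as an object of $\WIS$, if and only if $(a\s b)\s((a\we f)\s(b\we f))\in F$ for every $a,b\in A$ and every $f\in F$.

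It then remains only to invoke the theorem: the assignments $\theta\mapsto 1/\theta$ and $F\mapsto\Theta(F)$ set up an isomorphism between the lattice of congruences of $A$ and the lattice of congruent filters of $A$, and by the previous step the latter lattice is precisely the lattice of filters $F$ satisfying $(a\s b)\s((a\we f)\s(b\we f))\in F$ for all $a,b\in A$ and $f\in F$. I do not expect any genuine obstacle here; the whole content is the observation that $\rla$ and $\s$ coincide on $\gEA$, after which the statement is a verbatim instance of the theorem.
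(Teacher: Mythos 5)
Your proposal is correct and follows exactly the route the paper intends: the remark preceding the corollary already records that $t(a,b,f) = (a\s b)\s((a\we f)\s(b\we f))$ in the symmetric case, which is precisely your observation that $\rla$ collapses to $\s$ by commutativity, and the rest is a direct application of the theorem on congruent filters. Nothing is missing.
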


\subsection{Congruences for some algebras of $\WIS$}\label{sub1}

In this subsection we characterize the congruences of examples
\ref{ejs1} and \ref{ejs2}. \vspace{1pt}

The following elemental remark will be used later.

\begin{rem} \label{rcf}
Let $A\in \WIS$ and $F$ a congruent filter of $A$. Then $F =
\{1\}$ if and only if $x/\Theta(F) = \{x\}$ for every $x$.
\end{rem}

\subsection*{Congruent filters associated to the implications
given by (\ref{ej3}) or (\ref{ej5})}

Let $F$ be a filter of $(A,\we,\ra,1) \in \WIS$, where $\ra$ is
the implication given by (\ref{ej3}) or (\ref{ej5}).

\begin{prop}\label{cf35}
$F$ is congruent if and only if $F = \{1\}$ or $F=A$.
\end{prop}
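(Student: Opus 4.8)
The plan is to prove both implications directly from the definition of congruent filter, using the explicit form of the implications $(\ref{ej3})$ and $(\ref{ej5})$. The "if" direction is immediate: if $F = \{1\}$ then $t(a,b,f)\in F$ forces $t(a,b,1) = 1$, which one checks holds because $t(a,b,1) = (a\ra b)\rla (a\ra b) = 1$ by (H2); and if $F = A$ then trivially every element, in particular $t(a,b,f)$, lies in $F$. So the content is the "only if" direction.

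For the "only if" direction, suppose $F$ is a congruent filter with $F \neq \{1\}$; I must show $F = A$. Pick some $f \in F$ with $f \neq 1$. The strategy is to exhibit suitable $a,b$ for which $t(a,b,f)$ evaluates to the bottom element $0$ (or at least to an arbitrarily small element), since then $0 \in F$ forces $F = A$. Recall $t(a,b,f) = (a\ra b)\rla ((a\we f)\ra (b\we f))$. The idea is to choose $b = 1$ and $a$ so that $a \ra 1 = 1$ (which holds for both implications, since $a \leq 1$ in $(\ref{ej5})$, and for $(\ref{ej3})$ one checks $a\ra 1$ directly — note here $(\ref{ej3})$ gives $a \ra 1 = 0$ unless $a=1$, so I must treat the two implications separately). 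For $(\ref{ej5})$: take $a$ with $a \nleq 1$ impossible, so instead pick $a$ incomparable behavior — better, choose $a, b$ with $a \nleq b$ but $a\we f \leq b\we f$; then $(a\ra b) = 0$ while $((a\we f)\ra(b\we f)) = 1$, so $t = 0 \rla 1 = (0\ra 1)\we(1\ra 0) = 1 \we 0 = 0$, giving $0 \in F$. Such $a, b$ exist precisely when $f \neq 1$: take $b = f$ and $a$ any element with $a \nleq f$ (exists since $f\neq 1$, e.g. $a=1$); then $a\we f = f = b \we f$ so $(a\we f)\ra(b\we f) = 1$, while $a \nleq f = b$ so $a \ra b = 0$. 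For $(\ref{ej3})$: take $b = f$ and $a = 1 \neq f$; then $a \ra b = 1 \ra f = 0$ since $1 \neq f$, while $a \we f = f = b\we f$ so $(a\we f)\ra(b\we f) = f\ra f = 1$, hence $t(1,f,f) = 0\rla 1 = 0 \in F$, so $F = A$.

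The main obstacle is bookkeeping: verifying that $t$ collapses to $0$ requires knowing $0\rla 1 = 0$, which uses $1 \ra 0 = 0$ (true for $(\ref{ej3})$ since $1 \neq 0$, and for $(\ref{ej5})$ since $1 \nleq 0$ when $A$ is nontrivial) together with $0 \ra 1 = 1$. A minor subtlety is the trivial algebra $A = \{0\} = \{1\}$, where $\{1\} = A$ and the statement holds vacuously, so one may assume $|A| \geq 2$. One should also double-check that $F = \{1\}$ is always a (congruent) filter and $F = A$ is always a congruent filter, so that the stated dichotomy is genuinely a characterization; both are routine from the definitions. I expect the whole argument to be short once the right witnesses $a, b$ are chosen as above.
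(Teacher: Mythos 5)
Your proof is correct and follows essentially the same route as the paper: both arguments reduce to exhibiting a witness pair (you take $a=1$, $b=f$ for $f\in F\setminus\{1\}$; the paper takes any $x\neq y$ identified by $\Theta(F)$) for which $t$ evaluates to $0$, forcing $F=A$. Your parenthetical claim that $0\ra 1=1$ is false for the implication $(\ref{ej3})$ (there $0\ra 1=0$), but this is harmless since $0\rla 1$ is a meet with $1\ra 0=0$ and hence equals $0$ in either case.
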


\begin{proof}
If the algebra $A$ is trivial the proof is immediate, so we can
assume that $A$ is an algebra not trivial. Suppose that $F$ is
congruent, and that $F \neq \{1\}$. By Remark \ref{rcf} we have
that there exist $x,y\in A$ such that $x\neq y$ and $x/\Theta(F) =
y/\Theta(F)$. Thus, there is $f$ in $F$ such that $x\we f = y\we
f$. Suppose that $x\nleq y$. Taking into account that $F$ is
congruent we have that $(x\ra y) \ra 1 \in F$. But $x\nleq y$, so
$x\ra y = 0$. Hence, $(x\ra y)\ra 1 = 0\ra 1$. However, $0\ra 1 =
0$ because $0\nleq 1$. Then we have that $0\in F$. Analogously we
can show that if $y\nleq x$ then $0\in F$ by considering $y\ra x$
in place of $x\ra y$. Therefore, $F = A$.
\end{proof}

\subsection*{Congruent filters associated to the implication
given by (\ref{ej6})}

Let $F$ be a filter of $(A,\we,\ra,1)$, where $\ra$ is the
implication given by (\ref{ej6}).

\begin{prop}
$F$ is congruent if and only if it satisfies the following
conditions for every $x,y\in A$ and $f\in F$:
\begin{enumerate}[\normalfont 1)]
\item If $x\nleq y$ and $x\we f \leq y \we f$ then $y\in F$. \item
If $x\nleq y$, $x\we f \nleq y\we f$ and $y\nleq f$ then $y \in
F$. \item If $x\leq y$ and $x\we f \nleq y\we f$ then $y \in F$.
\end{enumerate}
\end{prop}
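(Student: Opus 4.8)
The plan is to unwind the definition of a congruent filter for the specific implication $\ra$ given by (\ref{ej6}). Recall $F$ is congruent exactly when $t(x,y,f) = (x\ra y) \rla ((x\we f)\ra(y\we f)) \in F$ for all $x,y\in A$ and $f\in F$. I will compute this element case by case according to whether $x\leq y$ (so $x\ra y = 1$) or $x\nleq y$ (so $x\ra y = y$), and likewise whether $x\we f \leq y\we f$ (so $(x\we f)\ra(y\we f) = 1$) or not (so $(x\we f)\ra(y\we f) = y\we f$). A preliminary observation I would record: since $x\we f \leq x$, if $x\leq y$ then $x\we f \leq y$ and also $x\we f \leq f$, hence $x\we f \leq y\we f$; so the combination ``$x\leq y$ and $x\we f\nleq y\we f$'' forces us instead into the subcase where one must be careful — actually $x\le y$ gives $x\wedge f\le y\wedge f$ always, so that case is \emph{vacuous} and condition 3) as stated must be read with the implication having a false hypothesis, hence 3) is automatically satisfied; I would note this and then the real content lies in cases 1) and 2). (I should double-check against the paper's intent here; if 3) is genuinely meant to be non-vacuous, the resolution is that the displayed conditions are exactly the cases that \emph{can} occur, and the ``if\dots then'' phrasing makes the vacuous ones harmless.)

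The core computation runs as follows. If $x\leq y$, then $x\ra y = 1$ and $x\we f\leq y\we f$, so $(x\we f)\ra(y\we f)=1$, whence $t(x,y,f) = 1\rla 1 = 1 \in F$ automatically — no condition needed. If $x\nleq y$: then $x\ra y = y$. Split on $(x\we f)\ra(y\we f)$. Subcase $x\we f\leq y\we f$: then $(x\we f)\ra(y\we f)=1$, so $t(x,y,f) = y\rla 1 = (y\ra 1)\we(1\ra y)$; with (\ref{ej6}) we have $y\ra 1 = 1$ and $1\ra y = y$ (as $1\nleq y$ unless $y=1$), so $t(x,y,f) = y$. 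Hence $t\in F \iff y\in F$: this is condition 1). Subcase $x\we f\nleq y\we f$: then $(x\we f)\ra(y\we f) = y\we f$, so $t(x,y,f) = y\rla(y\we f) = (y\ra(y\we f))\we((y\we f)\ra y)$. Now $(y\we f)\ra y = 1$ since $y\we f\leq y$; and $y\ra(y\we f)$ equals $1$ if $y\leq y\we f$, i.e. if $y\leq f$, and equals $y\we f$ otherwise. So $t(x,y,f) = 1\we 1 = 1$ when $y\leq f$ (automatic), and $t(x,y,f) = y\we f$ when $y\nleq f$. In the latter situation $t\in F \iff y\we f\in F$; but since $f\in F$ and $F$ is a filter closed under $\we$, and $y\we f\leq y$, upward closure gives $y\we f\in F \iff$... here one must be slightly careful: $y\we f\in F$ does imply $y\in F$ by upward closure, and conversely if $y\in F$ then $y\we f\in F$ since $f\in F$. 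So $t\in F\iff y\in F$, which is condition 2). Conversely, reading the three bullet conditions as hypotheses, one checks that in each case the displayed consequent $y\in F$ is exactly what makes the corresponding value of $t(x,y,f)$ lie in $F$, and in all remaining (vacuous or automatic) cases $t(x,y,f)=1\in F$ regardless; hence the three conditions together are equivalent to $F$ being congruent.

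The main obstacle I anticipate is purely bookkeeping: getting the case analysis exhaustive and correctly identifying which cases are vacuous (the $x\leq y$ with $x\we f\nleq y\we f$ case) versus automatic ($t=1$) versus genuinely constraining. A secondary subtlety is the handling of $1\ra y$ in (\ref{ej6}): it is $y$ when $y\neq 1$ and $1$ when $y=1$, but in both readings $1\ra y\in F \iff y\in F$ (as $1\in F$ always), so the edge case $y=1$ causes no trouble — worth a one-line remark. Once the table of values of $t(x,y,f)$ is laid out, the equivalence with conditions 1)--3) is immediate, and I would present the proof as: ``compute $t(x,y,f)$ in each case; it equals $1$ except in the three listed configurations, where it equals $y$ (in cases 1 and 2) or $y\we f$ (which lies in $F$ iff $y$ does); the claim follows.''
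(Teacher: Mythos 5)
Your proof is correct and follows essentially the same route as the paper's: a direct case-by-case evaluation of $t(x,y,f)=(x\ra y)\rla((x\we f)\ra(y\we f))$ under the implication (\ref{ej6}), showing it equals $1$, $y$, or $y\we f$ according to the configuration. Your extra observation that condition 3) is vacuous (since $x\leq y$ already forces $x\we f\leq y\we f$ in any semilattice) is correct and slightly sharpens the statement; the paper's own proof treats that case as if it could occur.
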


\begin{proof}
Let $x,y\in A$ and $f\in F$. Let $x\nleq y$ and $x\we f\leq y\we
f$. As $t(x,y,f) = y \in F$ we have the condition 1). Suppose that
$x\nleq y$, $x\we f \leq y\we f$ and $y\nleq f$. Taking into
account that $t(x,y,f) = y\we f\in F$ we obtain the condition 2).
Finally suppose that $x\leq y$ and $x\we f\leq y\we f$. Since
$t(x,y,f) = y\we f\in F$ then we obtain 3).

Conversely, suppose that we have the conditions 1), 2) and 3). Let
$x,y\in A$ and $f\in F$. Suppose that $x\leq y$. If $x\we f\leq
y\we f$ then $t(x,y,f) = 1\in F$. If $x\we f\nleq y\we f$ then
$t(x,y,f) = y\we f$. By 3) we have that $y\in F$, so $t(x,y,f)\in
F$. Suppose now that $x\nleq y$. If $x\we f\nleq y\we f$ then
$t(x,y,f) = y \in F$ by 1). If $x\we f\nleq y\we f$ then $t(x,y,f)
= y \rla (y\we f)$. We have two possibilities: $y\leq f$ or
$y\nleq f$. If $y\leq f$ then $t(x,y,f) = 1\in F$. If $y\nleq f$
then $t(x,y,f) = y\we f$. By 2) we have that $y\in F$. Hence,
$t(x,y,f) \in F$. Therefore, $F$ is congruent.
\end{proof}

\subsection*{Congruent filters associated to the implication given
by (\ref{ej1})}

Let $F$ be a filter of $(A,\we,\ra,1)$, where $\ra$ is the
implication given by (\ref{ej1}).

\begin{prop}
$F$ is congruent if and only if it satisfies the following
conditions for every $x,y\in A$ and $f\in F$:
\begin{enumerate}[\normalfont 1)]
\item If $x\neq y$ and $x\we f = y \we f$ then $y\in F$. \item If
$x\neq y$, $x\we f \neq y\we f$ and $y\nleq f$ then $y \in F$.
\end{enumerate}
\end{prop}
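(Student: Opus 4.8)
The statement to be proved is a characterization of congruent filters for the hemiimplicative semilattice with implication (\ref{ej1}): namely $a\ra b = 1$ if $a=b$ and $a\ra b = b$ otherwise. The plan is to follow the same template used in the preceding propositions for the implications (\ref{ej6}), (\ref{ej3}) and (\ref{ej5}): recall that $F$ is congruent exactly when $t(x,y,f)=(x\ra y)\rla((x\we f)\ra(y\we f))\in F$ for all $x,y\in A$ and $f\in F$, and then compute $t(x,y,f)$ explicitly in each of the finitely many cases arising from whether $x=y$ or not, and whether $x\we f=y\we f$ or not.

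First I would do the case analysis for $t(x,y,f)$ with $\ra$ given by (\ref{ej1}). If $x=y$, then $x\ra y=1$ and $x\we f=y\we f$, so $(x\we f)\ra(y\we f)=1$ and hence $t(x,y,f)=1\rla 1=1\in F$ automatically; this case imposes no condition. So assume $x\neq y$. Then $x\ra y=y$. Now split on whether $x\we f=y\we f$: if $x\we f=y\we f$ then $(x\we f)\ra(y\we f)=1$, so $t(x,y,f)=y\rla 1=(y\ra 1)\we(1\ra y)$. Using (\ref{ej1}), $1\ra y=y$ (since $1\neq y$ unless $y=1$, in which case it is $1$ anyway and the value is still $y$), and $y\ra 1 = 1$ if $y=1$ and $=1$ if $y\neq 1$ — wait, by (\ref{ej1}) $y\ra 1$ is $1$ when $y=1$ and equals $1$ when $y\neq 1$; so $y\ra 1=1$ always, giving $t(x,y,f)=1\we y=y$. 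Thus in this subcase $t(x,y,f)=y$, which lies in $F$ iff $y\in F$: this yields condition 1). If instead $x\we f\neq y\we f$, then $(x\we f)\ra(y\we f)=y\we f$, so $t(x,y,f)=y\rla(y\we f)=(y\ra(y\we f))\we((y\we f)\ra y)$. Since $y\we f\leq y$, we have $(y\we f)\ra y=1$, and $y\ra(y\we f)$ equals $1$ if $y=y\we f$ (i.e. $y\leq f$) and equals $y\we f$ if $y\neq y\we f$ (i.e. $y\nleq f$). Hence $t(x,y,f)=1$ when $y\leq f$ (automatically in $F$, no condition) and $t(x,y,f)=y\we f$ when $y\nleq f$, which lies in $F$ iff $y\in F$ (since $f\in F$, $y\we f\in F\iff y\in F$ by filter closure and upward closure): this yields condition 2).

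Assembling the cases gives the forward direction: if $F$ is congruent then conditions 1) and 2) hold, because the relevant instances of $t(x,y,f)$ reduce to $y$ and to $y\we f$ respectively, and membership in $F$ forces $y\in F$. For the converse, assume 1) and 2), and verify $t(x,y,f)\in F$ by running the same case split: in the cases $x=y$, or $x\neq y$ with $x\we f=y\we f$ producing $1$... no, $x\neq y$ with $x\we f=y\we f$ produces $y$ and is handled by 1); $x\neq y$ with $x\we f\neq y\we f$ and $y\leq f$ produces $1\in F$; $x\neq y$ with $x\we f\neq y\we f$ and $y\nleq f$ produces $y\we f$, handled by 2). In every case $t(x,y,f)\in F$, so $F$ is congruent. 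The only mild obstacle is bookkeeping the evaluation of the nested $\rla$ under (\ref{ej1}) correctly — in particular being careful that $z\rla w$ with $w\leq z$ collapses to $w$ — but there is no real difficulty; the argument is entirely parallel to the proof for (\ref{ej6}).
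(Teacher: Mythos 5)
Your proof is correct and follows essentially the same route as the paper: evaluate $t(x,y,f)$ by cases on whether $x=y$, whether $x\we f=y\we f$, and whether $y\leq f$, obtaining the values $1$, $y$, $1$ and $y\we f$ exactly as in the paper's argument, and read off conditions 1) and 2). One harmless slip: for the implication (\ref{ej1}) the justification ``$(y\we f)\ra y=1$ since $y\we f\leq y$'' is wrong (that implication returns $1$ only on equality, so when $y\nleq f$ the value is $y$, not $1$), but the resulting meet $y\we(y\we f)=y\we f$ is the same, so the conclusion is unaffected.
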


\begin{proof}
Assume that $F$ is congruent, and let $x,y\in A$ such that $x\neq
y$. Let $f\in F$. Then $t(x,y,f) = y \rla ((x\we f)\ra (y\we f))$.
Moreover, $t(x,y,f)\in F$. If $x\we f = y \we f$ then $t(x,y,f) =
y \in F$. Suppose now that $x\we f \neq y\we f$ and $y\nleq f$
(i.e., $y\neq y \we f$). Thus, $t(x,y,f) = y \rla (y\we f) = y\we
f\in F$, so $y\in F$.

Conversely, suppose it holds the conditions 1) and 2). Consider
$x,y\in A$ and $f\in F$. If $x = y$ then $t(x,y,f) = 1\in F$. If
$x\neq y$ then $t(x,y,f) = y \rla ((x\we f)\ra (y \we f))$. If
$x\we f = y\we f$ then $t(x,y,f) = y$, which belongs to $F$ by
condition 1). Suppose that $x\we f \neq y\we f$. Then $t(x,y,f) =
y \rla (y\we f)$. If $y\leq f$ then $t(x,y,f) = 1\in F$. If
$y\nleq f$ then $t(x,y,f) = y\we f$, which also belongs to $F$ by
condition 2) (because $f$, $y\in F$).
\end{proof}

\subsection*{Congruent filters associated to the implication given
by (\ref{ej2})}

Let $F$ be a filter of $(A,\we,\ra,1)$, where $\ra$ is the
implication given by (\ref{ej2}).

\begin{prop}\label{cf2}
$F$ is congruent if and only if it satisfies the following
conditions for every $x,y\in A$ and $f\in F$:
\begin{enumerate}[\normalfont 1)]
\item If $x\nleq y$ and $x\we f \leq y \we f$ then $x\we y\in F$.
\item If $x\nleq y$, $x\we f \nleq y\we f$ and $x\we y\nleq f$
then $x\we y \in F$.
\end{enumerate}
\end{prop}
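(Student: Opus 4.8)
The plan is to compute the term $t(x,y,f) = (x\ra y)\rla((x\we f)\ra(y\we f))$ explicitly in all relevant cases for the implication (\ref{ej2}), exactly as was done for the implications (\ref{ej6}) and (\ref{ej1}) in the preceding propositions, and then read off the conditions characterizing congruent filters. Recall that for (\ref{ej2}) we have $u\ra v = 1$ if $u\leq v$ and $u\ra v = u\we v$ otherwise, and that in any $A\in\WIS$ we have $z\rla 1 = 1\rla z = z$ and $z\rla z = 1$. I would first dispense with the easy case $x\leq y$: then $x\we f\leq y\we f$ as well, so $x\ra y = 1$ and $(x\we f)\ra(y\we f)=1$, whence $t(x,y,f) = 1\rla 1 = 1\in F$ automatically; thus $x\leq y$ imposes no condition, which is why conditions 1) and 2) are only stated for $x\nleq y$.

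Next I would treat the case $x\nleq y$, so $x\ra y = x\we y$. Here there are two subcases according to whether $x\we f\leq y\we f$. If $x\we f\leq y\we f$, then $(x\we f)\ra(y\we f) = 1$, so $t(x,y,f) = (x\we y)\rla 1 = x\we y$; hence $t(x,y,f)\in F$ precisely when $x\we y\in F$, which gives condition 1). If instead $x\we f\nleq y\we f$, then $(x\we f)\ra(y\we f) = (x\we f)\we(y\we f) = x\we y\we f$, so $t(x,y,f) = (x\we y)\rla(x\we y\we f)$. Now I split once more on whether $x\we y\leq f$: if $x\we y\leq f$ then $x\we y\we f = x\we y$, so $t(x,y,f) = (x\we y)\rla(x\we y) = 1\in F$ automatically; if $x\we y\nleq f$ then $t(x,y,f) = (x\we y)\rla(x\we y\we f)$, and since $x\we y\we f\leq x\we y$ this reduces (using $z\rla(z\we w) = z\we w$ when $z\we w\le z$, which holds because $(z\ra(z\we w))\we((z\we w)\ra z) = (z\we(z\we w))\we 1 = z\we w$) to $t(x,y,f) = x\we y\we f$; since $f\in F$, membership $t(x,y,f)\in F$ is equivalent to $x\we y\in F$, yielding condition 2).

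For the converse direction I would assume conditions 1) and 2) hold and verify that $t(x,y,f)\in F$ for every $x,y\in A$ and $f\in F$, simply by running through the same case analysis: $x\leq y$ gives $t = 1\in F$; $x\nleq y$ with $x\we f\leq y\we f$ gives $t = x\we y\in F$ by 1); $x\nleq y$ with $x\we f\nleq y\we f$ and $x\we y\leq f$ gives $t = 1\in F$; and $x\nleq y$ with $x\we f\nleq y\we f$ and $x\we y\nleq f$ gives $t = x\we y\we f$, which lies in $F$ by 2) (noting $x\we y\in F$ by 2) and $f\in F$, so $x\we y\we f\in F$; or directly since $x\we y\we f\in F$ is what 2) together with closure under $\we$ delivers). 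The argument closely parallels the proofs for (\ref{ej1}) and (\ref{ej6}) already in the text.

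The only mildly delicate point — the ``main obstacle'' such as it is — is the repeated simplification of expressions of the form $z\rla(z\we w)$ and $z\rla 1$; these are not new identities but follow from $z\rla 1 = z$, $z\rla z = 1$, and the general fact (Remark following the definition of $\WIS$) that $z\we w\leq z$ forces $z\rla(z\we w) = z\we w$ in any hemiimplicative semilattice with implication (\ref{ej2}), since then $(z\ra(z\we w)) = 1$ and $((z\we w)\ra z) = 1$ would give $z\rla(z\we w) = 1$ — wait, one must be careful: with implication (\ref{ej2}), $z\ra(z\we w) = 1$ since $z\we w\le z$ is false in general; rather $z\we w\le z$ always, so $z\ra (z\we w)$: is $z\le z\we w$? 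Only if $z\le w$. So in general $z\ra(z\we w) = z\we(z\we w) = z\we w$ and $(z\we w)\ra z = 1$, giving $z\rla(z\we w) = (z\we w)\we 1 = z\we w$. So one must compute within the specific implication (\ref{ej2}) rather than invoke generic $\WIS$ identities, and keeping the cases straight is the part that requires care; everything else is bookkeeping.
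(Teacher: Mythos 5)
Your proof is correct and follows essentially the same route as the paper: the identical case analysis on $x\leq y$ versus $x\nleq y$, then on $x\we f\leq y\we f$, and finally on $x\we y\leq f$, with the same explicit evaluations $t(x,y,f)=1$, $x\we y$, or $x\we y\we f$. One small caution: your opening claim that $z\rla 1 = 1\rla z = z$ holds in any $A\in\WIS$ is false in general (e.g.\ with implication (\ref{ej3}) one gets $z\rla 1 = 0$ for $z\neq 1$), but you only ever apply it to the algebra with implication (\ref{ej2}), where a direct computation confirms it, and your closing paragraph already makes the right correction of grounding all simplifications in that specific implication.
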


\begin{proof}
Suppose that $F$ is congruent. Let $x\nleq y$ and $x\we f \leq
y\we f$. Then $t(x,y,f) = x\we y \in F$, so we have the condition
1). Suppose that $x\nleq y$, $x\we f \nleq y\we f$ and $x\we y
\nleq f$. Then $t(x,y,f) = x\we y \we f \in F$, so $x\we y \in F$,
which is the condition 2).

Conversely, suppose that it holds the conditions 1) and 2). Let
$x,y \in A$ and $f\in F$. If $x\leq y$ then $t(x,y,f) = 1 \in F$.
Suppose that $x \nleq y$. If $x\we f \leq y\we f$ then $t(x,y,f) =
x\we y$, which belongs to $F$ by the condition 1). Suppose that
$x\nleq y$ and $x\we f \nleq y\we f$. Then $t(x,y,f) = (x\we y)
\rla (x \we y \we f)$. If $x\we y \leq f$ then $t(x,y,f) = 1\in
F$. If $x\we y \nleq f$ then $t(x,y,f) = x\we y \we f$. But by
condition 2) we have that $x\we y\in F$. Therefore, $t(x,y,f) \in
F$.
\end{proof}

\subsection*{Congruent filters associated to the implication
given by (\ref{ej4})}

Let $F$ be a filter of $(A,\we,\ra,1)$, where $\ra$ is the
implication given by (\ref{ej4}).

\begin{prop}\label{cf4}
$F$ is congruent if and only if it satisfies the following
conditions for every $x,y\in A$ and $f\in F$:
\begin{enumerate}[\normalfont 1)]
\item If $x\neq y$ and $x\we f = y \we f$ then $x\we y\in F$.
\item If $x\neq y$, $x\we f \neq y\we f$ and $x\we y\nleq f$ then
$x\we y \in F$.
\end{enumerate}
\end{prop}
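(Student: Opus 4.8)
The plan is to proceed exactly as in the proof of Proposition~\ref{cf2}; the only real work is the explicit evaluation of the term $t(x,y,f) = (x\ra y)\rla\bigl((x\we f)\ra(y\we f)\bigr)$ for the implication (\ref{ej4}). First I would record two auxiliary facts about this implication: $z\rla 1 = z$ for every $z$, and $z\rla w = z\we w$ whenever $w\le z$. Both follow at once from the definition of (\ref{ej4}) together with $z\rla w = (z\ra w)\we(w\ra z)$: if $z=1$ (resp.\ $w=z$) the first (resp.\ second) identity is trivial, and otherwise both arrows reduce to the meet.

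Next I would split into cases to compute $t(x,y,f)$. If $x=y$ then $x\ra y=1$ and $x\we f = y\we f$, so $t(x,y,f) = 1\rla 1 = 1$. If $x\ne y$, then $x\ra y = x\we y$, and I distinguish according to $(x\we f)\ra(y\we f)$: when $x\we f = y\we f$ this is $1$, so $t(x,y,f) = (x\we y)\rla 1 = x\we y$; when $x\we f \ne y\we f$ this equals $(x\we f)\we(y\we f) = x\we y\we f$, so $t(x,y,f) = (x\we y)\rla(x\we y\we f)$, which equals $1$ if $x\we y\le f$ and equals $x\we y\we f$ if $x\we y\nleq f$. Hence the only values $t(x,y,f)$ can take other than $1$ are $x\we y$ (when $x\ne y$ and $x\we f = y\we f$) and $x\we y\we f$ (when $x\ne y$, $x\we f\ne y\we f$ and $x\we y\nleq f$).

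The equivalence is then read off. For the forward direction, if $F$ is congruent then $t(x,y,f)\in F$ for all $x,y\in A$ and $f\in F$; in the first exceptional case this yields $x\we y\in F$, i.e.\ condition~1), and in the second it yields $x\we y\we f\in F$, whence $x\we y\in F$ because $F$ is upward closed and $x\we y\we f\le x\we y$; this is condition~2). For the converse, assume 1) and 2); then in every case $t(x,y,f)\in F$, since the value $1$ lies in every filter, the value $x\we y$ lies in $F$ by~1), and the value $x\we y\we f$ lies in $F$ because 2) gives $x\we y\in F$ and then $x\we y\we f = (x\we y)\we f\in F$ as $F$ is closed under $\we$. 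Therefore $F$ is congruent.

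The only obstacle here is purely computational, namely keeping the case split on equality-versus-inequality of $x,y$ and of $x\we f,y\we f$ under control; there is no conceptual difficulty beyond what has already been handled in Proposition~\ref{cf2}.
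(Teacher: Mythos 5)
Your proof is correct and follows essentially the same route as the paper: the same case analysis (on $x=y$ versus $x\neq y$, on $x\we f = y\we f$ versus not, and on $x\we y\leq f$ versus not) yielding the same values $1$, $x\we y$ and $x\we y\we f$ for $t(x,y,f)$, and the same use of upward closure to pass from $x\we y\we f\in F$ to $x\we y\in F$. One small caveat: your auxiliary identity $z\rla w = z\we w$ for $w\leq z$ fails when $w = z\neq 1$ (there $z\rla z = 1$), but this does no harm, since in the only situation where $w=z$ occurs, namely $x\we y\leq f$, you correctly compute the value $1$ directly rather than via that identity.
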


\begin{proof}
Suppose that $F$ is congruent. In order to prove 1) and 2)
consider $x\neq y$. Suppose that $x\we f = y \we f$. Then we have
that $t(x,y,f) = (x\we y) \rla 1$. If $1\rla (x\we y) = 1$ then $1
= x\we y$, i.e., $x = y = 1$, which is an absurd. Then $t(x,y,f) =
x\we y \in F$. Hence we have the condition 1). Suppose now that
$x\we f \neq y\we f$ and $x\we y\nleq f$. Hence, $t(x,y,f) = (x\we
y) \rla (x\we y \we f)$. But $x\we y \nleq f$, so $t(x,y,f) = x\we
y \we f \in F$. Thus $x\we y\in F$, which is the condition 2).

Conversely, suppose that $F$ satisfies the conditions 1) and 2).
Let $x,y \in F$ and $f\in F$. If $x = y$ then $t(x,y,f) = 1\in F$.
Suppose that $x\neq y$, so $t(x,y,f) = (x\we y) \rla ((x\we f) \ra
(y\we f))$. If $x\we f = y\we f$ then $t(x,y,f) = x\we y$, which
belongs to $F$ by 1). Suppose that $x\we f \neq y\we f$, so
$t(x,y,f) = (x\we f) \rla (x\we y \we f)$. If $x\we y \leq f$ then
$t(x,y,f) = 1\in F$. If $x\we y\nleq f$ then by 2) we have that
$x\we y \in F$. But $t(x,y,f) = x\we y \we f$. Thus, $t(x,y,f)\in
F$.
\end{proof}

\subsection*{Totally ordered posets}

Let $A\in \WIS$ and $x,y,f\in A$. If $f\leq x\we y$ then $t(x,y,f)
= (x\ra y) \rla 1$, if $y\leq f\leq x$ then $t(x,y,f) = (x\ra y)
\rla (f\ra y)$, if $x\leq f \leq y$ then $t(x,y,f) = (x\ra y) \rla
(x\ra f)$ and if $x \leq f$ and $y\leq f$ then $t(x,y,f) = 1$.
Hence, we obtain the following result.

\begin{prop} \label{cadenas}
Let $A\in \WIS$ such that its underlying poset is a chain and let
$F$ be a filter of $A$. Then $F$ is congruent if and only if for
every $x,y \in A$ and $f\in F$ it holds the following conditions:
\begin{enumerate}[\normalfont (a)]
\item If $f\leq x\we y$ then $(x\ra y) \rla 1 \in F$. \item If
$y\leq f\leq x$ then $(x\ra y) \rla (f\ra y) \in F$. \item If
$x\leq f \leq y$ then $(x\ra y) \rla (x\ra f) \in F$.
\end{enumerate}
\end{prop}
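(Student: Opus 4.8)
The plan is to unwind the definition of congruent filter and reduce everything to the four-case description of $t(x,y,f)$ recorded in the paragraph immediately preceding the statement. Recall that $F$ is congruent precisely when $t(x,y,f)\in F$ for all $x,y\in A$ and all $f\in F$, where $t(x,y,f)=(x\ra y)\rla((x\we f)\ra(y\we f))$. Since the underlying poset of $A$ is a chain, for any $x,y\in A$ the meet $x\we y$ is the smaller of $x$ and $y$, and any two elements are comparable; hence at least one of the following situations occurs (with overlaps only at the boundaries): (i) $f\leq x\we y$; (ii) $y\leq f\leq x$; (iii) $x\leq f\leq y$; (iv) $x\leq f$ and $y\leq f$. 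In these four cases the cited computation yields, respectively, $t(x,y,f)=(x\ra y)\rla 1$, $t(x,y,f)=(x\ra y)\rla(f\ra y)$, $t(x,y,f)=(x\ra y)\rla(x\ra f)$, and $t(x,y,f)=1$.

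For the forward implication I would assume $F$ is congruent and fix $x,y\in A$ and $f\in F$. If $f\leq x\we y$ then $t(x,y,f)=(x\ra y)\rla 1\in F$, which is condition (a); if $y\leq f\leq x$ then $t(x,y,f)=(x\ra y)\rla(f\ra y)\in F$, which is condition (b); and if $x\leq f\leq y$ then $t(x,y,f)=(x\ra y)\rla(x\ra f)\in F$, which is condition (c). For the converse I would assume (a), (b) and (c), fix $x,y\in A$ and $f\in F$, and use comparability in the chain to split into the four situations above. In case (iv) we get $t(x,y,f)=1\in F$ with no hypothesis; in case (i) condition (a) gives $t(x,y,f)\in F$; in case (ii) condition (b) gives it; and in case (iii) condition (c) gives it. Thus $t(x,y,f)\in F$ in every case, so $F$ is congruent.

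The only point requiring a little care is to verify that the case split is genuinely exhaustive in a chain and that the overlapping boundary instances are harmless: when $f$ equals $x$, $y$ or $x\we y$, two of the cases apply at once, but the corresponding formulas for $t(x,y,f)$ then coincide --- for instance, if $f=y\leq x$ then $(x\ra y)\rla 1$ and $(x\ra y)\rla(y\ra y)$ agree because $y\ra y=1$ --- so membership in $F$ is unambiguous. Beyond this bookkeeping the argument is a direct translation of the preceding remark, and I do not anticipate any real obstacle.
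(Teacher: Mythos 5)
Your proof is correct and follows essentially the same route as the paper, which derives the proposition directly from the four-case computation of $t(x,y,f)$ in the preceding paragraph (the paper simply states ``Hence, we obtain the following result'' after that computation, leaving the case analysis implicit). Your extra check that the boundary overlaps are harmless is unnecessary but not wrong, since all the case formulas compute the single well-defined element $t(x,y,f)$.
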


\begin{cor}\label{ptop}
Consider the hemiimplicative semilattices, whose underlying order is total, with the implications
$\mathrm{(\ref{ej6})}$, $\mathrm{(\ref{ej1})}$,
$\mathrm{(\ref{ej2})}$ and $\mathrm{(\ref{ej4})}$ (see examples
\ref{ejs1} and \ref{ejs2}). Every filter in these algebras is congruent.
\end{cor}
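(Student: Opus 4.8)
The plan is to reduce everything to Proposition \ref{cadenas}. Since the underlying poset of each of these four algebras is a chain, a filter $F$ is congruent precisely when conditions (a), (b) and (c) of Proposition \ref{cadenas} hold for all $x,y\in A$ and $f\in F$. So I would simply verify those three conditions directly, treating the four implications (\ref{ej6}), (\ref{ej1}), (\ref{ej2}) and (\ref{ej4}) in parallel as much as possible.

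First I would record two elementary facts valid in any bounded chain $A$ under any one of these implications. (i) $z\rla 1 = z$ for every $z\in A$: one checks $z\ra 1\geq z$ and $1\ra z = z$ whenever $z\neq 1$, so $z\rla 1 = (z\ra 1)\we(1\ra z) = z$ (and both sides equal $1$ when $z=1$). (ii) If $b<a$ (equivalently $a\nleq b$, since $A$ is a chain) then $a\ra b = b$: for (\ref{ej6}) and (\ref{ej1}) this is the definition, and for (\ref{ej2}) and (\ref{ej4}) it holds because $a\we b = b$. These two observations make the value of each relevant term $t(x,y,f)$ transparent.

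With (i) and (ii) in hand I would run the case analysis. For (a): if $f\leq x\we y$, then by (i) the term is $x\ra y$; this equals $1$ or $y$ when $x\leq y$, and equals $y$ when $y<x$ by (ii); in every case the value is $\geq f\in F$, hence in $F$. For (b): if $y\leq f\leq x$, then either $y=f$, so $f\ra y=1$ and by (i) the term is $x\ra y\in\{1,y\}$ with $y=f\in F$; or $y<f$, which forces $y<x$, so $f\ra y=y$ and $x\ra y=y$ by (ii), and the term is $y\rla y=1\in F$. For (c): if $x\leq f\leq y$, then either $x=f$, so $x\ra f=1$ and by (i) the term is $x\ra y$, which is $1$, or $y\geq f$, or $x\we y\in\{x,y\}$, hence in $F$; or $x<f$, which forces $x<y$, and one splits by implication: for (\ref{ej6}) and (\ref{ej2}) both $x\ra f$ and $x\ra y$ equal $1$ so the term is $1$; for (\ref{ej4}) both equal $x$ so the term is $x\rla x=1$; for (\ref{ej1}) the term is $y\rla f$, which is $1$ if $f=y$ and is $f\we y=f\in F$ if $f<y$.

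The only case where the term does not collapse either to $1$ or to an element visibly above some element of $F$ is implication (\ref{ej1}) in condition (c): there one must use explicitly that $f\in F$ and that $F$ is a filter (closed under $\we$) to conclude $y\rla f=f\in F$. This is the one spot deserving care; all other cases are routine once (i) and (ii) are noted. Having checked (a), (b) and (c) for all four implications, Proposition \ref{cadenas} gives that every filter of these algebras is congruent, which is the claim.
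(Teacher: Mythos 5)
Your proof is correct and takes essentially the same route as the paper: both reduce the claim to Proposition \ref{cadenas} and then compute the possible values of $(x\ra y)\rla 1$, $(x\ra y)\rla(f\ra y)$ and $(x\ra y)\rla(x\ra f)$ for each of the four implications, checking each lies in $F$. The only cosmetic slip is in your case (a), where for implication (\ref{ej4}) with $x<y$ the value of $x\ra y$ is $x\we y=x$ rather than ``$1$ or $y$''; your operative justification --- that the value is always $\geq f$ --- still covers this, in agreement with the paper's enumeration $\{x\we y,y,1\}$.
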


\begin{proof}
Let $x,y \in A$ and $f\in F$. If $f\leq x\we y$. Then $(x\ra y)
\rla 1 \in \{x\we y,y,1\} \subseteq F$. If $y\leq f\leq x$ then
$(x\ra y) \rla (f\ra y) \in \{f,1\} \subseteq F$. If $x\leq f \leq
y$ then $(x\ra y) \rla (x\ra f) \in \{f,1\}\subseteq F$.
Therefore, it follows from Proposition \ref{cadenas} that $F$ is
congruent.
\end{proof}

Notice, however, that it follows from Proposition \ref{cf35} that there
are examples of hemiimplicative semilattices where the order is
total with the property that not every filter is congruent.

On the other hand, it is not the case that every filter in a non
totally ordered algebra of the classes considered in previous
corollary is congruent. Consider, for example, the boolean lattice
of four elements, where $x$ and $y$ are the atoms, and let
$F = \{x,1\}$. We write $B$ for
the universe of this algebra. We have that $F$ is a filter of
$(B,\we, \ra_i, 1)$, where $\ra_i$ is the implication given in (i)
of the above mentioned examples, for $i=1, \dots,6$. We write
$t_i$ for the ternary term $t$ over the algebra $(B,\we, \ra_i,
1)$. Since $t_1(x,y,x)$, $t_2(y,x,x)$, $t_3(0,y,x)$, $t_4(0,y,x)$,
$t_5(y,x,x)$ and $t_6(x,y,x)$ are the bottom then $F$ is not a
congruent filter.

\subsection{Principal congruences for algebras of $\WIS$}\label{sub2}

Let $A\in \WIS$ and $a,b\in A$. We write $F^{c}(a)$ for the
congruent filter generated by $\{a\}$. In \cite{SM2} it was proved
that if $\theta(a,b)$ is the congruence generated by $(a,b)$, then
$(x,y) \in \theta(a,b)$ if and only if $x\rla y \in F^{c}(a\rla
b)$. We will give some definitions in order to make possible an
explicit description of $F^{c}(a)$. \vspace{1pt}

For $X\subseteq A$ we define $t(x,y,X) = \{t(x,y,z):z\in X\}$.
Then we define $t^{+}(x,y,X)$ as the elements $z\in A$ such that
$z\geq t(x,y,w_1)\we \dots \we t(x,y,w_k) \we w_{k+1}\we \dots
w_{k+t}$ for some $w_i\in X$. In a next step we define
\[
t(X) = \bigcup_{x,y\in A}t^{+}(x,y,X).
\]
We also define $T_{0}(X) = X$, $T_{n+1}(X) = t(T_{n}(X))$ and
$T(X) = \bigcup_{n \in \mathbb{N}} T_n (X)$, where $\mathbb{N}$ is
the set of natural numbers. It is immediate that $a\in T(\{a\})$.

\begin{prop}
Let $A\in \WIS$ and $a\in A$. Then $F^{c}(a) = T(\{a\})$.
\end{prop}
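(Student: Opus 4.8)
The plan is to prove the equality $F^{c}(a) = T(\{a\})$ by showing the two inclusions, exploiting that $F^{c}(a)$ is by definition the least congruent filter containing $a$. First I would establish that $T(\{a\})$ is a filter: it contains $a$ (hence $1$, since $a \le 1$ forces $1 \in t^{+}(x,y,\{a\})$ for any $x,y$ by the upward-closure built into $t^{+}$), it is upward closed (immediate from the definition of $t^{+}$, which is itself upward closed, and the fact that $T(X)$ is a union of such sets over the stages $T_n$), and it is closed under meets. For the meet closure, if $u,v \in T(\{a\})$, then $u \in T_m(\{a\})$ and $v \in T_n(\{a\})$ for some $m,n$; taking $N = \max(m,n)$ and using that the $T_n$ are increasing (which itself needs a short induction: $X \subseteq t(X)$ because each $w \in X$ lies in $t^{+}(x,y,X)$ with the degenerate product consisting of $w$ alone), both $u$ and $v$ lie in $T_N(\{a\})$, and then $u \we v$ is a product of the allowed shape, so $u \we v \in t^{+}(x,y,T_N(\{a\})) \subseteq T_{N+1}(\{a\}) \subseteq T(\{a\})$.

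Next I would verify that $T(\{a\})$ is \emph{congruent}, i.e.\ $t(x,y,f) \in T(\{a\})$ for every $x,y \in A$ and $f \in T(\{a\})$. Given such an $f$, it lies in some $T_n(\{a\})$; then $t(x,y,f) \in t(x,y,T_n(\{a\})) \subseteq t^{+}(x,y,T_n(\{a\})) \subseteq t(T_n(\{a\})) = T_{n+1}(\{a\}) \subseteq T(\{a\})$. This is essentially forced by the construction: the operator $t(\cdot)$ was designed precisely to close off under the ternary term $t$ together with finite meets, and $T$ is its transfinite (here just $\omega$-) iteration. Since $T(\{a\})$ is a congruent filter containing $a$, minimality of $F^{c}(a)$ gives $F^{c}(a) \subseteq T(\{a\})$.

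For the reverse inclusion $T(\{a\}) \subseteq F^{c}(a)$, I would show by induction on $n$ that $T_n(\{a\}) \subseteq F^{c}(a)$. The base case $T_0(\{a\}) = \{a\} \subseteq F^{c}(a)$ is the defining property of $F^{c}(a)$. For the inductive step, assume $T_n(\{a\}) \subseteq F^{c}(a)$ and let $z \in T_{n+1}(\{a\}) = t(T_n(\{a\}))$; then $z \ge t(x,y,w_1) \we \dots \we t(x,y,w_k) \we w_{k+1} \we \dots \we w_{k+t}$ for suitable $x,y \in A$ and $w_i \in T_n(\{a\}) \subseteq F^{c}(a)$. Since $F^{c}(a)$ is a congruent filter, each $t(x,y,w_i) \in F^{c}(a)$ and each $w_i \in F^{c}(a)$, so their meet lies in $F^{c}(a)$, and by upward closure $z \in F^{c}(a)$. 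Taking the union over $n$ yields $T(\{a\}) \subseteq F^{c}(a)$, and combining the two inclusions finishes the proof.

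The routine part is the bookkeeping that $T(\{a\})$ is a filter; the only place requiring a little care — and the main (mild) obstacle — is handling the degenerate cases in the definition of $t^{+}$: that the product $t(x,y,w_1)\we\cdots\we t(x,y,w_k)\we w_{k+1}\we\cdots\we w_{k+t}$ is allowed to have $k=0$ (giving a plain meet of elements of $X$, which delivers closure under $\we$ and the inclusion $X \subseteq t(X)$) or to consist of a single factor (again giving $X \subseteq t(X)$, hence monotonicity of the $T_n$), and that an empty meet should be read as $1$ so that $1 \in T(\{a\})$. Once these conventions are pinned down, both inclusions are short inductions as above.
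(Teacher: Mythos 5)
Your proposal is correct and follows essentially the same route as the paper's own proof: show that $T(\{a\})$ is a congruent filter containing $a$ (using that the stages $T_n(\{a\})$ form an increasing chain), then show by induction on $n$ that every congruent filter containing $a$ contains each $T_n(\{a\})$. Your explicit attention to the degenerate cases in the definition of $t^{+}$ is a reasonable clarification of details the paper leaves as ``straightforward computations,'' but it does not change the argument.
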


\begin{proof}
Straightforward computations based on induction show that
\begin{equation}\label{ein}
T_{n}(\{a\}) \subseteq T_{n+1}(\{a\})
\end{equation}
for every $n$. We use this property throughout this proof.

We have that $1\in T(\{a\})$. It follows from the construction
that $T(\{a\})$ is an upset. In order to prove that this set is
closed by $\we$, let $z,z' \in T(\{a\})$. Then there are $n$ and
$m$ such that $z\in T_{n}(\{a\})$ and $z'\in T_{m}(\{a\})$. By
$(\ref{ein})$ we have that $z,z'\in T_{p}(\{a\})$, where $p$ is
the maximum between $n$ and $m$. Straightforward computations
prove that $z\we z'\in T_{p}(\{a\})$, so $z\we z' \in T(\{a\})$.
Hence, $T(\{a\})$ is a filter.

In order to show that $T(\{a\})$ is congruent, let $z\in T(\{a\})$
and $x,y \in A$. Then, there is $n$ such that $z\in T_{n}(\{a\})$.
Taking into account that $t(x,y,z)\geq t(x,y,z) \we z$, we have
that $t(x,y,z) \in t(T_{n}(\{a\}))$, i.e., $t(x,y,z) \in
T_{n+1}(\{a\}) \subseteq T(\{a\})$. Thus, $T(\{a\})$ is congruent.

Finally we show that $F^{c}(a) = T(\{a\})$. Let $F$ be a congruent
filter such that $a\in F$. We need to prove that
$T(\{a\})\subseteq F$, i.e., that $T_{n}(\{a\})\subseteq T(\{a\})$
for every $n$. If $n = 0$ is immediate. Suppose that
$T_{n}(\{a\})\subseteq T(\{a\})$ for some $n$. We shall prove that
$T_{n+1}(\{a\})\subseteq T(\{a\})$. Let $z\in T_{n+1}(\{a\})$.
Then there are $x,y\in A$ and $w_1,...,w_{k+t} \in T_{n}(\{a\})$
such that
\[
z\geq t(x,y,w_1)\we...t(x,y,w_k)\we w_{k+1}\we...\we w_{k+t}.
\]
But $T_{n}(\{a\})\subseteq F$, and $F$ is a congruent filter.
Thus,
\[
t(x,y,w_1)\we...t(x,y,w_k)\we w_{k+1}\we...\we w_{k+t} \in F.
\]
Hence, $z\in F$. Therefore, $T_{n+1}(\{a\}) \subseteq F$, which
was our aim.
\end{proof}

\begin{cor}
Let $A\in \WIS$ and $a,b\in A$. Then $(x,y)\in \theta(a,b)$ if and
only if $x \rla y\in T(\{a\rla b\})$.
\end{cor}

\subsection*{Acknowledgments}

This work was supported by CONICET, Argentina [PIP
112-201101-00636] and Universidad Nacional de La Plata [11/X667].

{\small }


\noindent Jos\'e Luis Castiglioni,\\
Departamento de Matem\'atica, \\
Facultad de Ciencias Exactas (UNLP), \\
and CONICET.\\
Casilla de correos 172,\\
La Plata (1900),\\
Argentina.\\
jlc@mate.unlp.edu.ar\\

\noindent Hern\'an Javier San Mart\'in,\\
Departamento de Matem\'atica, \\
Facultad de Ciencias Exactas (UNLP), \\
and CONICET.\\
Casilla de correos 172,\\
La Plata (1900),\\
Argentina.\\
hsanmartin@mate.unlp.edu.ar

\end{document}